\newtheorem{theorem}{Theorem}
\newtheorem{lemma}[theorem]{Lemma}
\newtheorem*{thm}{Theorem}
\begin{document}

\title{sections of surface bundles}

\author{Jonathan A. Hillman }
\address{School of Mathematics and Statistics\\
     University of Sydney, NSW 2006\\
      Australia }

\email{jonathan.hillman@sydney.edu.au}

\begin{abstract}
Let $p:E\to{B}$ be a bundle projection with base $B$ and fibre $F$ 
aspherical closed connected surfaces.
We review what algebraic topology can tell us 
about such bundles and their total spaces\
and then consider criteria for $p$ to have a section.
In particular, we simplify the cohomological obstruction,
and show that the transgression $d^2_{2,0}$ in the homology LHS 
spectral sequence of a central extension is evaluation of the extension class.
We also give several examples of bundles without sections.
\end{abstract}

\keywords{factor set, surface bundle, section}

\subjclass{20K35,57N13}

\maketitle

Let $p:E\to{B}$ be a bundle projection with base $B$ and fibre $F$ 
aspherical closed connected surfaces,
and let $\pi=\pi_1(E)$, $\beta=\pi_1(B)$ and $\phi=\pi_1(F)$.
The exact sequence of homotopy for $p$ reduces to an extension 
of fundamental groups
\[
\xi(p):\quad1\to\phi\to\pi\to\beta\to1,
\]
which determines $p$ up to bundle isomorphism over $B$.
Sections \S2--\S4 are a summary of our talk at the Bonn conference, 
reviewing what else algebraic topology can tell us about such bundles 
and their total spaces.
(We refer to \cite{Hi} for the arguments and other sources.)

The main part of this work (\S5--\S8)
considers criteria for the existence of sections.
Such a bundle $p$ has a section if and only if $\xi(p)$ splits,
and this is so if and only if the action of $\beta$ through 
outer automorphisms of $\phi$ lifts, and a cohomology class is trivial.
We simplify the latter condition,
and identify the transgression $d^2_{2,0}$ in the homology LHS spectral
sequence of a central extension with evaluation of the extension class.
It is relatively easy to find examples of torus bundles without sections,
but seems more difficult to construct such examples with hyperbolic fibre.
We thank H.Endo for the example with base and fibre of genus 3 given in \S9,
and N.Salter and the referees for further suggestions relating to this example.

We conclude with a short list of questions arising from issues considered here.

\section{notation}

Let $\zeta{G}$, $G'$ and $I(G)$ denote the centre, 
the commutator subgroup and the isolator subgroup of a group $G$, 
respectively.
(Thus $G'\leq{I(G)}$ and $G/I(G)$ is the maximal torsion-free 
quotient of the abelianization $G^{ab}=G/G'$.) 
If $H$ is a subgroup of $G$ let $C_G(H)$ be the centralizer of $H$ in $G$.
Let $c_g$ denote conjugation by $g$, for all $g\in{G}$.
If $S$ is a subset of $G$ then $\langle{S}\rangle$ is the subgroup 
of $G$ generated by $S$ and $\langle\langle{S}\rangle\rangle$ 
is the normal closure of $S$ in $G$
(the smallest normal subgroup of $G$ which contains $S$).

If $A$ is an abelian group and $\theta:\beta\to{Aut}(A)$
is a homomorphism then $A^\theta$ shall denote the left $\mathbb{Z}[\beta]$-module with underlying group $A$ and module structure determined by $g.a=\theta(g)(a)$ for all $g\in\beta$ and $a\in{A}$.
(We shall usually write just $A$ when the action $\theta$ is trivial.)
If $G$ is any group then the homomorphism from $Aut(G)$ to $Aut(\zeta{G})$ determined by restriction factors through $Out(G)$.
In particular,
a homomorphism $\theta:\beta\to{Out}(G)$ determines a 
$\mathbb{Z}[\beta]$-module $\zeta{G}^\theta$.

We shall assume throughout that ``surface" means aspherical 
closed connected 2-manifold, 
except in the result cited at the end of \S2.
(We do not assume that surfaces are orientable,
although this constraint is imposed by some of the references cited.)
A group $G$ is a $PD_2$-group if $G\cong\pi_1(X)$ for some
such surface $X$, and it is a $PD_2^+$-group if $X$ is orientable.
If $G$ is a $PD_2$-group, with orientation character $w=w_1(G)=w_1(X)$,
let $G^+=\mathrm{Ker}(w)$ and let $X^+$ be the associated 
orientable covering space of $X$.

\section{bundles and group extensions}

The classification of surface bundles follows from the deep result of \cite{EE},
that if $X$ is a hyperbolic surface then the identity component 
of $Diff(X)$ is contractible.
(See \cite{Gr} for a proof using only differential topology, 
which applies also to the based case.)
The flat surfaces $X=T$ or $Kb$ have circle actions, 
and $Diff(X)_o\sim(S^1)^r$, 
where $r=2$ or 1 is the rank of the centre of $\pi_1(X)$.
In all cases, the inclusions of $Diff(X)$ into $Homeo(X)$ and into 
the group of self homotopy equivalences $E(X)$ are homotopy equivalences.

The numbers in parentheses before the statements of theorems 
in this section refer to the corresponding theorems in \cite{Hi}.

\begin{thm} [5.2]
Let $p:E\to{B}$ be a bundle projection with base $B$ and fibre $F$ 
surfaces.
Then $p$ is determined up to bundle isomorphism over $B$
by the group extension $\xi(p)$.
Conversely, every such extension is realized by some bundle.
\qed
\end{thm}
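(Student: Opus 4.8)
The plan is to reduce the classification of such bundles to the homotopy theory of the classifying space $BDiff(F)$, and then to match that homotopy theory with the Eilenberg--MacLane theory of extensions of $\beta$ by $\phi$. Smooth $F$-bundles over $B$, up to bundle isomorphism over $B$, are classified by the set $[B,BDiff(F)]$. By the homotopy equivalences recalled above, $Diff(F)\simeq Homeo(F)\simeq E(F)$, so $BDiff(F)\simeq BE(F)$. Since $F=K(\phi,1)$ is aspherical, the based self-homotopy-equivalences satisfy $E_*(F)\simeq Aut(\phi)$ (discrete), and the evaluation fibration $E_*(F)\to E(F)\to F$ then yields $\pi_0E(F)=Out(\phi)$, $\pi_1E(F)=\zeta\phi$ (the kernel of $\phi\to Aut(\phi)$), and $\pi_iE(F)=0$ for $i\geq2$; thus $E(F)_o\simeq K(\zeta\phi,1)$, consistent with $Diff(X)_o\sim(S^1)^r$. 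Hence $BE(F)$ is a two-stage Postnikov system with $\pi_1=Out(\phi)$ and $\pi_2=\zeta\phi$ (with the tautological action), classified by a single $k$-invariant $k_F\in H^3(Out(\phi);\zeta\phi)$.

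For uniqueness I would argue directly. If $\xi(p)$ and $\xi(p')$ are equivalent extensions there is an isomorphism $\theta:\pi\to\pi'$ over $\beta$, restricting to an isomorphism $\phi\to\phi'$. Since $E$, $E'$ and $B$ are aspherical, $\theta$ is induced by a homotopy equivalence $h:E\to E'$ with $p'h\simeq p$ (maps into $B=K(\beta,1)$ being determined by $\pi_1$), and $h$ restricts on each fibre to a homotopy equivalence. Thus $p$ and $p'$ are fibre homotopy equivalent over $B$; as $[B,BDiff(F)]\to[B,BE(F)]$ is a bijection, such a fibre homotopy equivalence is realized by a bundle isomorphism over $B$, so $p\cong p'$ over $B$.

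For realization I would run obstruction theory for maps $B=K(\beta,1)\to BE(F)$ against this Postnikov tower. Composition with $BE(F)\to K(Out(\phi),1)$ sends such a map to a homomorphism $\omega:\beta\to Out(\phi)$, well defined up to conjugacy, namely the action of $\beta$ on $\phi$ through outer automorphisms. For fixed $\omega$ the obstruction to lifting to $BE(F)$ is $\omega^*k_F\in H^3(\beta;\zeta\phi^\omega)$, and when it vanishes the homotopy classes of lifts form a torsor over $H^2(\beta;\zeta\phi^\omega)$. This matches exactly the Eilenberg--MacLane classification: an extension of $\beta$ by $\phi$ has an abstract kernel $\omega:\beta\to Out(\phi)$, an extension realizing $\omega$ exists iff a class $o(\omega)\in H^3(\beta;\zeta\phi^\omega)$ vanishes, and those realizing a given $\omega$ then form a torsor over $H^2(\beta;\zeta\phi^\omega)$. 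Granting that $k_F$ is the \emph{universal} obstruction, i.e. $\omega^*k_F=o(\omega)$, and that the resulting bijection between lifts and extensions sends a classifying map $f$ to $\xi(p_f)$ (the extension read off the Serre sequence $\phi\to\pi_1E\to\beta$ of the bundle $p_f$ classified by $f$), the theorem follows: given any extension $\xi$ with kernel $\omega$ we have $o(\omega)=0$ because $\xi$ exists, hence $\omega^*k_F=0$, so a lift $f$ exists, and using the matching of the two $H^2$-torsor structures $f$ may be chosen with $\xi(p_f)\cong\xi$.

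The main obstacle is precisely this matching: one must verify that the $k$-invariant $k_F$ of $BE(F)$ agrees (up to sign) with the universal Eilenberg--MacLane obstruction for extensions with kernel $\phi$, and that the two $H^2(\beta;\zeta\phi^\omega)$-torsor structures are identified compatibly with $p\mapsto\xi(p)$. Here one must be careful with base points and with the conjugacy ambiguity in $\omega$ --- it is this ambiguity that makes ``bundle isomorphism over $B$'' the natural equivalence, rather than the stricter notion of equivalence of extensions, which also pins down the identification of the fibre group. One must also treat the flat cases $F=T,Kb$ carefully, since there $\zeta\phi\neq0$ and the obstruction theory is genuinely non-trivial; in particular the computation of $\pi_*E(F)$ above rests on the based form of \cite{EE}, for which one appeals to \cite{Gr}.
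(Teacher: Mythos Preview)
The paper does not actually prove this theorem: it is stated with a terminal \qed, and the parenthetical ``(5.2)'' refers the reader to Theorem~5.2 of \cite{Hi}. The only argument offered in the paper is the paragraph preceding the statement, which records the key input --- the Earle--Eells/Gramain results that $Diff(F)\hookrightarrow{E(F)}$ is a homotopy equivalence and that the identity component is contractible (hyperbolic case) or a torus (flat case). Your proposal uses precisely this input and unpacks it into the standard argument, so you are following the approach the paper gestures at.

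Your outline is correct, but the obstruction-theoretic route to realization is more elaborate than necessary, and the ``main obstacle'' you flag can be sidestepped entirely. Since $\beta$ is a surface group it has cohomological dimension~$2$, so $H^3(\beta;\zeta\phi^\omega)=0$ and both the Postnikov obstruction $\omega^*k_F$ and the Eilenberg--MacLane obstruction $o(\omega)$ vanish automatically; there is nothing to match in degree~$3$. (The paper makes exactly this observation in \S5.) For realization one can then argue directly, avoiding any torsor comparison: given an extension $1\to\phi\to\pi\to\beta\to1$, realize the projection $\pi\to\beta$ by a map $K(\pi,1)\to{K(\beta,1)}=B$, convert it into a Hurewicz fibration with fibre $K(\phi,1)\simeq{F}$, and then invoke the bijection $[B,BDiff(F)]\to[B,BE(F)]$ to replace this fibration by a genuine smooth $F$-bundle $p$ over $B$. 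By construction $\xi(p)$ is the given extension. Your uniqueness argument is already this direct style and is fine as written.
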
 

Conjugation in $\pi$ determines a homomorphism $\theta$ 
from $\beta$ to the outer automorphism group $Out(\phi)$.
If $\chi(F)<0$ (i.e., if  $\zeta\phi=1$), 
the extension is determined by the action alone; 
in general, 
extensions corresponding to a given action $\theta$ 
are classified by characteristic cohomology classes in
$H^2(B;\zeta\phi^\theta)=H^2(\beta;\zeta\phi^\theta).$

When is a closed 4-manifold $M$ ``equivalent" 
to the total space of such a bundle,
and, if so, in how many ways?
If equivalent means ``homotopy equivalent" or ``TOP $s$-cobordant", 
there is a satisfactory answer,
but little is known about diffeomorphism, 
except when $M$ has additional structure.

\begin{thm} [3.5.1]
Let $M$ be a closed $4$-manifold such that $\pi=\pi_1(M)$ 
is an extension of $\pi_1(B)$ by $\pi_1(F)$, 
where $B$ and $F$ are surfaces.
Then $M$ is aspherical if and only if $\chi(M)=\chi(B)\chi(F)$.
\qed
\end{thm}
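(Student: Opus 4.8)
The statement is an equivalence; the implication ``$M$ aspherical $\Rightarrow\chi(M)=\chi(B)\chi(F)$'' is soft, and the converse is where the content lies. For the easy direction, suppose $M$ is aspherical. Since every extension of surface groups is realized by a surface bundle (Theorem~5.2 of \cite{Hi}, quoted above), there is a surface bundle $E\to B$ with fibre $F$ and $\pi_1(E)\cong\pi$; its total space $E$ is a closed $4$-manifold, and because $F$ and $B$ are aspherical the homotopy exact sequence of the bundle gives $\pi_n(E)=0$ for $n\ge2$, so $E\simeq K(\pi,1)$. Multiplicativity of the Euler characteristic in this fibration (from the Leray--Serre spectral sequence, $B$ being a finite complex) gives $\chi(E)=\chi(B)\chi(F)$. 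As $M\simeq K(\pi,1)\simeq E$, I conclude $\chi(M)=\chi(B)\chi(F)$.

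For the converse, suppose $\chi(M)=\chi(B)\chi(F)$. An extension of a $PD_2$-group by a $PD_2$-group is a $PD_4$-group (see \cite{Hi}), so $\pi$ is a $PD_4$-group; equivalently the aspherical manifold $E$ above is a $K(\pi,1)$, so $\chi(\pi):=\chi(K(\pi,1))$ is defined and, by the computation just made, equals $\chi(E)=\chi(B)\chi(F)=\chi(M)$. It therefore suffices to establish the general fact (this is Theorem~3.5.1 of \cite{Hi}): \emph{a closed $4$-manifold $M$ whose fundamental group $\pi$ is a $PD_4$-group and which satisfies $\chi(M)=\chi(\pi)$ is aspherical.}

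To prove it, consider the classifying map $c_M:M\to K(\pi,1)\simeq E$, which is the identity on $\pi_1$; $M$ is aspherical exactly when the universal cover $\widetilde M$ is contractible. Compute with $\mathbb{Z}[\pi]$-coefficients: $\widetilde M$ is $1$-connected, so $H_0(\widetilde M)=\mathbb{Z}$ and $H_1(\widetilde M)=0$; as $\pi$ is infinite, $H^0(M;\mathbb{Z}[\pi])=0$; as $\pi$ is a $PD_4$-group, $H^1(M;\mathbb{Z}[\pi])=H^1(\pi;\mathbb{Z}[\pi])=0$, and Poincar\'e duality in $M$ gives $H_3(\widetilde M)=H_3(M;\mathbb{Z}[\pi])\cong H^1(M;\mathbb{Z}[\pi])=0$, while $H_4(\widetilde M)=0$ since $\widetilde M$ is a connected non-compact $4$-manifold. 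Thus the only possibly non-zero reduced homology of $\widetilde M$ is $H_2(\widetilde M)=\pi_2(M)$, and it is enough to show $\pi_2(M)=0$ (then $\widetilde M$ is acyclic and simply connected, hence contractible). For this I bring in the Euler characteristic: by Atiyah's theorem $\chi(M)=\sum_i(-1)^ib_i^{(2)}(M)$ and $\chi(\pi)=\chi(E)=\sum_i(-1)^ib_i^{(2)}(E)$, with $b_i^{(2)}(E)=b_i^{(2)}(\pi)$; $L^2$-Poincar\'e duality for the closed $4$-manifolds $M$ and $E$ reduces these to $\chi(M)=-2b_1^{(2)}(M)+b_2^{(2)}(M)$ and $\chi(\pi)=-2b_1^{(2)}(E)+b_2^{(2)}(E)$ (the degree-$0$ terms vanish as $\pi$ is infinite), and since $c_M$ is $2$-connected, $b_1^{(2)}(M)=b_1^{(2)}(E)$. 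Hence $\chi(M)=\chi(\pi)$ forces $b_2^{(2)}(M)=b_2^{(2)}(E)$, from which one wants to conclude $\pi_2(M)=0$.

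The main obstacle is precisely this final step: upgrading the equality $b_2^{(2)}(M)=b_2^{(2)}(E)$ (equivalently $\chi(M)=\chi(\pi)$) to the vanishing of $\pi_2(M)$. Vanishing von Neumann dimension does not by itself kill a module, so one must exploit the finer structure of $\Pi:=\pi_2(M)$ as a $\mathbb{Z}[\pi]$-module. It is finitely generated, it is conjugate to $H^2(M;\mathbb{Z}[\pi])$, and, being the only non-vanishing homology of the mapping cone of $C_*(\widetilde{E\pi})\to C_*(\widetilde M)$, it admits a finite free $\mathbb{Z}[\pi]$-resolution of Euler characteristic $\chi(M)-\chi(\pi)=0$ and sits inside free $\mathbb{Z}[\pi]$-modules in a way constrained by Poincar\'e duality; combined with good properties of the surface-by-surface group $\pi$, these facts force $\Pi=0$. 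This is exactly the content that Theorem~3.5.1 of \cite{Hi} packages, and I would cite it rather than reprove it; every step preceding it above is routine.
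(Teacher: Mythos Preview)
The paper does not prove this theorem; the \qed\ immediately after the statement, together with the remark opening \S2 (``we refer to \cite{Hi} for the arguments''), signals that the result is simply quoted from \cite{Hi}. Your proposal therefore does strictly more than the paper while ending in the same place: you supply the easy direction in full, correctly reduce the hard direction to the general criterion (a closed $4$-manifold whose $\pi_1$ is a $PD_4$-group and with $\chi(M)=\chi(\pi)$ is aspherical), sketch the $L^2$-Betti-number set-up that Hillman in fact uses, and honestly flag the step where the real work lies before deferring to \cite{Hi}.

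One caution on labels: by the paper's convention the bracketed ``3.5.1'' \emph{is} the numbering in \cite{Hi}, so the statement you are asked to prove is itself 3.5.1 there; the general $PD_4$-group criterion you invoke is the parent Theorem~3.5, and the surface-bundle statement is its Corollary~3.5.1. Thus when you write ``this is Theorem~3.5.1 of \cite{Hi}'' for the general criterion, that is a slip in numbering rather than a logical circularity---your reduction of the corollary to the theorem is correct.
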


Although 4-dimensional TOP surgery techniques are not yet available if 
$\pi$ has non-abelian free subgroups, 
5-dimensional surgery often suffices to construct $s$-cobordisms. 

\begin{thm} [6.15]
A closed $4$-manifold $M$ is TOP $s$-cobordant to the total space $E$ 
of an $F$-bundle over $B$,
where $B$ and $F$ are surfaces,
if and only if $\pi_1(M)\cong\pi_1(E)$ and $\chi(M)=\chi(E)$.
If so, then the universal covering space $\widetilde M$ 
is homeomorphic to $\mathbb{R}^4$.
\qed
\end{thm}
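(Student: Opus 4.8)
The plan is to reduce the problem to the asphericity criterion Theorem~(3.5.1) and then run the surgery programme, working in dimension five in order to sidestep the failure of four--dimensional $TOP$ surgery when $\pi=\pi_1(E)$ contains nonabelian free subgroups. The ``only if'' direction is immediate: an $s$-cobordism $W$ deformation retracts onto each of its ends, so $\pi_1(M)\cong\pi_1(W)\cong\pi_1(E)$, and since $W$ is a compact $5$-manifold with $\partial W=M\sqcup E$ one has $\chi(M)=\chi(W)=\tfrac12\chi(\partial W)=\tfrac12(\chi(M)+\chi(E))$, forcing $\chi(M)=\chi(E)$.

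For the converse, assume $\pi_1(M)\cong\pi$ and $\chi(M)=\chi(E)$. The Euler characteristic is multiplicative in fibre bundles, so $\chi(E)=\chi(B)\chi(F)$, and Theorem~(3.5.1) then shows that $E$ is aspherical; hence $\pi$ is a $PD_4$-group and $E$ is a $K(\pi,1)$. Since $\pi_1(M)\cong\pi$ is an extension of $\pi_1(B)$ by $\pi_1(F)$ and $\chi(M)=\chi(B)\chi(F)$, the same criterion shows $M$ is aspherical, so $M$ too is a $K(\pi,1)$. The prescribed isomorphism $\pi_1(M)\cong\pi_1(E)$ is therefore realized by a homotopy equivalence $f\colon M\to E$.

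It remains to improve $f$ to an $s$-cobordism, and this is the crux. Because $\pi$ is built from the surface groups $\pi_1(B)$ and $\pi_1(F)$ by a single extension, it is torsion--free with $\mathrm{Wh}(\pi\times\mathbb{Z}^k)=0$ and $\widetilde K_0(\mathbb{Z}[\pi\times\mathbb{Z}^k])=0$ for all $k\ge0$, and the $TOP$ surgery assembly maps for $\pi$ and its products with $\mathbb{Z}^k$ are isomorphisms; in particular the structure set $\mathcal{S}^{TOP}(E\times S^1)$ of the aspherical $5$-manifold $E\times S^1$ is trivial. Hence $f\times\mathrm{id}_{S^1}\colon M\times S^1\to E\times S^1$ represents the base point of that structure set, so it is $s$-cobordant to a homeomorphism, and by the $s$-cobordism theorem in dimension six (valid for all fundamental groups) it is homotopic to one; thus $M\times S^1\cong E\times S^1$. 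A codimension--one splitting (fibering) argument of Farrell--Siebenmann type along the circle factor, whose obstruction lies in groups assembled from $\mathrm{Wh}(\pi\times\mathbb{Z}^j)$ and $\mathrm{Nil}$-type terms and hence vanishes, then yields a $TOP$ $s$-cobordism $W$ from $M$ to $E$. (Alternatively one can try to produce $W$ directly by five--dimensional surgery on a normal bordism $W\to E\times I$ extending $f$ and $\mathrm{id}_E$, the obstruction lying in $L_5(\mathbb{Z}[\pi])$ and being governed by the same assembly maps.)

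For the last assertion, $\widetilde E$ fibres over $\widetilde B=\mathbb{R}^2$ with fibre $\widetilde F=\mathbb{R}^2$, and a bundle over $\mathbb{R}^2$ is trivial, so $\widetilde E\cong\mathbb{R}^4$; lifting $W$ to universal covers gives a proper $h$-cobordism from $\widetilde M$ to $\widetilde E\cong\mathbb{R}^4$ whose proper Whitehead torsion lives in a group built from $\mathrm{Wh}(\pi)$ and $\widetilde K_0(\mathbb{Z}[\pi])$, both zero, so this cobordism is a product and $\widetilde M\cong\mathbb{R}^4$. The main obstacle throughout is the surgery step: four--dimensional $TOP$ surgery is not available for these fundamental groups, so one must stabilize into dimension five and descend by a splitting theorem, and everything rests on the (deep, here quoted) vanishing of the relevant Whitehead, $\widetilde K_0$ and $\mathrm{Nil}$ groups and on the surgery assembly maps being isomorphisms for poly--surface groups --- statements which in turn rely on Farrell--Jones-type machinery together with earlier work of Waldhausen, Farrell--Hsiang and Cappell.
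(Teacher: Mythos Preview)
The paper does not prove this theorem at all: it is quoted from \cite{Hi} (Theorem 6.15 there), as are all the results in \S2, and is marked \qed\ with no argument. So there is no ``paper's own proof'' to compare with; one can only ask whether your outline is plausible and whether it matches the argument in \cite{Hi}.

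Your strategy for the $s$-cobordism part is essentially the one carried out in \cite{Hi}: both $M$ and $E$ are aspherical by Theorem~(3.5.1), hence homotopy equivalent; $\mathrm{Wh}(\pi)=0$ (Waldhausen, extended to poly-surface groups), so this is a simple homotopy equivalence; four--dimensional surgery is unavailable for such $\pi$, so one stabilises to $E\times S^1$ (or $E\times I$), uses that the $L$-theory assembly map is an isomorphism for these groups, and then descends. Your sketch of this is correct in outline, though the sentence about the structure set is slightly tangled (if $\mathcal{S}^{TOP}(E\times S^1)$ is trivial then $f\times\mathrm{id}$ is already homotopic to a homeomorphism; no intermediate $s$-cobordism is needed), and the splitting step deserves a more careful statement of which obstruction groups are in play.

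The argument for $\widetilde M\cong\mathbb{R}^4$, however, does not work as written. The lift $\widetilde W$ is simply connected, so its ``proper Whitehead torsion'' is not an element of anything built from $\mathrm{Wh}(\pi)$ or $\widetilde K_0(\mathbb{Z}[\pi])$; those are invariants of $\pi$, not of the cover. More seriously, even granting $\widetilde M\times\mathbb{R}\cong\mathbb{R}^5$, one cannot conclude $\widetilde M\cong\mathbb{R}^4$ by a product argument: the Whitehead $3$-manifold satisfies $W\times\mathbb{R}\cong\mathbb{R}^4$ yet $W\not\cong\mathbb{R}^3$, and the analogous phenomenon obstructs any naive descent. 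The argument in \cite{Hi} is independent of the $s$-cobordism: one shows directly that $\widetilde M$ is $1$-connected at infinity, using that $\pi$ has an infinite finitely presented normal subgroup $\phi$ with infinite quotient $\beta$ (both $PD_2$-groups, hence one-ended), and then invokes Freedman's theorem that an open contractible $4$-manifold which is $1$-connected at infinity is homeomorphic to $\mathbb{R}^4$.
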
 
                      
When $\pi$ is solvable $s$-cobordism implies homeomorphism,
and $M$ is then homeomorphic to an
$\mathbb{E}^4$-, $\mathbb{N}il^4$-, $\mathbb{N}il^3\times\mathbb{E}^1$- 
or $\mathbb{S}ol^3\times\mathbb{E}^1$-manifold.
Conversely, if $M$ has one of these geometries and $\beta_1(M)\geq2$
then $M$ fibres over $T$. (See Chapter 8 of \cite{Hi}.)
The other geometries realized by total spaces of surface bundles are
$\mathbb{H}^2\times\mathbb{E}^2$, 
$\mathbb{H}^3\times\mathbb{E}^1$,
$\widetilde{\mathbb{SL}}\times\mathbb{E}^1$ and
$\mathbb{H}^2\times\mathbb{H}^2$.
Hamenst\"adt has announced that no such bundle space 
has geometry $\mathbb{H}^4$ \cite{Ha}.
Finally, $\mathbb{H}^2(\mathbb{C})$ may be excluded as a consequence 
of the next theorem (due independently to Kapovich, Kotschick and Hillman) 
and the fact that quotients of the unit ball in $\mathbb{C}^2$
do not submerse holomorphically onto complex curves \cite{Liu}.

\begin{thm} [13.7]
Let $S$ be a complex surface.
Then $S$ admits a holomorphic submersion onto a complex curve, 
with base and fibre of genus $\geq2$, if and only if 
$S$ is homotopy equivalent to the total space of a bundle
with base and fibre hyperbolic surfaces.
\qed
\end{thm}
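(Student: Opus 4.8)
The ``only if'' implication is immediate: a holomorphic submersion $f\colon S\to C$ from a compact complex surface onto a compact complex curve is proper with everywhere surjective differential, hence a smooth locally trivial bundle by Ehresmann's theorem, and its base and fibres are then compact orientable surfaces of genus $\geq2$; so $S$ is the total space of a bundle with base and fibre hyperbolic surfaces, and in particular is homotopy equivalent to one.

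For the converse, let $S$ be homotopy equivalent to the total space $E$ of an $F$-bundle over $B$ with $F,B$ hyperbolic. Since $S$ is orientable we may assume $F=\Sigma_{g_F}$ and $B=\Sigma_{g_B}$ orientable, with $g_F,g_B\geq2$ (orientability of $F$ is forced, since otherwise $E$, hence $S$, would be non-orientable, and that of $B$ reduces to this case; see \cite{Hi}). Put $\pi=\pi_1(S)\cong\pi_1(E)$, so that $1\to\phi\to\pi\to\beta\to1$ with $\phi=\pi_1(F)$ and $\beta=\pi_1(B)$. The plan is first to locate $S$ in the Enriques--Kodaira classification. As $\chi(S)=\chi(E)=\chi(B)\chi(F)>0$ and $\pi_1(S)$ is an extension of $\pi_1(B)$ by $\pi_1(F)$, Theorem~[3.5.1] makes $S$ aspherical; hence $\widetilde S$ is contractible and $\pi_2(S)=H_2(\widetilde S)=0$. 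Thus $S$ has no $(-1)$-curve and no rational curve at all: the normalization of such a curve is a $2$-sphere that lifts to $\widetilde S$, hence is null-homotopic, forcing its homological image (a nonzero multiple of a fibre class, or of the curve's own class) to vanish. Running through the classification with $\chi_{\mathrm{top}}(S)>0$ and $b_1(S)=b_1(E)\geq b_1(B)\geq2$: Kodaira dimension $-\infty$ makes $S$ rational, ruled (hence with rational curves) or of class VII (hence $b_1(S)=1$); the aspherical surfaces of Kodaira dimension $0$ (tori, bielliptic, Kodaira) have $\chi_{\mathrm{top}}=0$; and an aspherical minimal properly elliptic surface has no singular fibre (each would produce a rational curve in $S$), hence also $\chi_{\mathrm{top}}=0$. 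So $\kappa(S)=2$, and, having no $(-1)$-curve, $S$ is a minimal surface of general type, in particular projective and Kähler.

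Next I would realize the surjection $\rho\colon\pi\twoheadrightarrow\beta$ — onto a genus $\geq2$ surface group, with finitely generated kernel $\phi$ — geometrically. By the theorem fibering compact Kähler manifolds over curves (Siu; Jost--Yau; Beauville; Catanese), $\rho$ factors through $f_*\colon\pi\to\pi_1(C)$ for a holomorphic fibration $f\colon S\to C$ with connected fibres onto a smooth curve, and finite generation of $\phi$ makes the kernel of the induced $\psi\colon\pi_1(C)\to\beta$ finite, hence trivial ($\pi_1(C)$ being torsion-free); so $\psi$ is an isomorphism, $g(C)=g_B\geq2$, and $\ker f_*=\ker\rho=\phi$. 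Let $F'$ be a general fibre, of genus $g'$, and let $H$ be the image of $\pi_1(F')$ in $\pi$; then $H\leq\ker f_*=\phi$ and $H$ is finitely generated and normal in the closed surface group $\phi$, so it is trivial or of finite index. Triviality would, via the exact sequence $1\to H\to\pi\to\pi_1^{\mathrm{orb}}(C)\to1$ (Nori), make $\pi$ an orbifold surface group, impossible for a $PD_4$-group; finite index makes $\phi/H$ a finite normal subgroup of the Fuchsian group $\pi_1^{\mathrm{orb}}(C)$, hence trivial. Either way $H=\phi$: $f$ has no multiple fibre, and $\pi_1(F')\twoheadrightarrow\phi=\pi_1(\Sigma_{g_F})$ gives $g'\geq g_F$. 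Finally $\chi(E)=\chi_{\mathrm{top}}(S)=4(g_B-1)(g'-1)+\sum_c\epsilon_c$ with all $\epsilon_c\geq0$ (contributions of singular fibres); since $\chi(E)=4(g_B-1)(g_F-1)$ and $g'\geq g_F$, this forces $g'=g_F$ and $\sum_c\epsilon_c=0$, i.e.\ $f$ has no singular fibre. Hence $f\colon S\to C$ is a holomorphic submersion with base and fibre of genus $\geq2$ (and, by Theorem~[5.2], it recovers $E$).

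The step I expect to be the main obstacle is this last one — promoting the holomorphic fibration to a submersion — where one must combine the Kähler-group fibration theorem in a suitably refined form (to identify $\pi_1(C)$ and to exclude multiple fibres) with the structure of normal subgroups of surface and Fuchsian groups and with the numerical rigidity forced by $\chi_{\mathrm{top}}(S)=\chi(E)$. The Enriques--Kodaira reduction, by contrast, is routine once one notes that asphericity of a complex surface is precisely the absence of rational curves, which is exactly what makes the ruled and elliptic cases collapse.
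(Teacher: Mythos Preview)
The paper does not actually prove this theorem: it is quoted (with the reference ``13.7'') from \cite{Hi} and marked \qed, the surrounding text attributing the result independently to Kapovich, Kotschick and Hillman and referring to \cite{Hi} ``for the arguments and other sources''. So there is no in-paper proof to compare your attempt against.

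That said, your argument is essentially the standard one and is correct in outline: use Theorem~3.5.1 to get asphericity, deduce minimality and exclude the low Kodaira-dimension cases via the absence of homologically nontrivial rational curves, land in general type (hence K\"ahler), invoke the Siu--Beauville/Catanese fibration theorem to produce a holomorphic fibration $f\colon S\to C$ realising the surjection onto $\beta$, and then use the orbifold exact sequence together with the normal-subgroup structure of surface and Fuchsian groups and the Euler-characteristic formula to force $f$ to be a submersion with the right fibre genus. This is precisely the strategy of the Kapovich/Kotschick/Hillman proofs recorded in \cite{Hi}.

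One genuine slip to fix: in ``finite generation of $\phi$ makes the kernel of the induced $\psi\colon\pi_1(C)\to\beta$ \emph{finite}, hence trivial'', the kernel is not finite but \emph{finitely generated} (being $f_*(\phi)$); the correct dichotomy for a finitely generated normal subgroup of the surface group $\pi_1(C)$ is ``trivial or of finite index'', and finite index is ruled out because $\beta$ is infinite. Your conclusion $\psi$ is an isomorphism is right, but the stated reason is not. A smaller point: the phrase ``either way $H=\phi$'' is misleading, since you have just shown $H$ trivial is impossible; only the finite-index branch survives and gives $H=\phi$. With these corrections the proof is sound.
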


In this case homotopy equivalence implies diffeomorphism!

\section{the group determines the bundle up to finite ambiguity}

If $\chi(B)<0$, then there are only finitely many ways of representing
$\pi$ as an extension of $PD_2$-groups, 
up to ``change of coordinates" \cite{Jo}.
Let $\pi$ be a $PD_4$-group with a normal subgroup $K$ 
such that $K$ and $\pi/K$ are surface groups with trivial centre.
Johnson showed that whether 
\renewcommand{\theenumi}{\Roman{enumi}}
\begin{enumerate}

\item $\mathrm{Im}(\theta)$ is infinite and $\mathrm{Ker}(\theta)\not=1$;

\item $\mathrm{Im}(\theta)$ is finite; or

\item $\theta$ is injective
\end{enumerate} 
\renewcommand{\theenumi}{\arabic{enumi}}
depends only on $\pi$ and not on the subgroup $K$.
In case I there is an unique such normal subgroup, 
and in case II there are two, and $\pi$ is virtually a product.
In case III there are finitely many such normal subgroups.
These assertions follows ultimately from the facts 
that nontrivial finitely generated normal subgroups
of hyperbolic surface groups have finite index, 
and the Euler characteristic increases on passage to such subgroups.
(See \cite{Jo}, or Theorems 5.5 and 5.6 of \cite{Hi}.)

There are only finitely many conjugacy classes of finite subgroups of $Out(\phi)$, for any given $PD_2$-group $\phi$.
(This follows from the Nielsen Conjecture.
See Chapter 7 of \cite{FM} for the case when $\phi$ is orientable.)
Thus the number of groups $\pi$ of type II with given $\chi(\pi)$ is finite.
In particular, 
for each $n>0$ there are only finitely many surface bundles 
with total space $E$ such that $\chi(E)=n$ and $E$ admits a geometry 
(necessarily $\mathbb{H}^2\times\mathbb{H}^2$ -- see Chapter 13 of \cite{Hi}.)
At the other extreme, if $\phi$ is orientable and
$\pi$ has no subgroup isomorphic to $\mathbb{Z}^2$, 
it must be of type III, 
and there are again only finitely many such groups with given $\chi(\pi)$
\cite{Bo}.

The examples of Kodaira, Atiyah and Hirzebruch of surface bundles 
with nonzero signature are of type III, 
and each have at least two such normal subgroups \cite{BD}.
It is noteworthy that in each case one of the subgroups 
satisfies the condition $\chi(K)^2\leq\chi(\pi)$.
It is a straightforward consequence of Johnson's arguments 
that this extra condition holds for at most one such subgroup $K$, 
if $\pi$ is of type III.
(In particular, if $\chi(\pi)=4$ there is at most one such $K$ 
with $K$ and $\pi/K$ both orientable.)

The Johnson trichotomy extends to the case when $\pi/K$ has a centre,
but is inappropriate if $\zeta K\not=1$,
as there are then nontrivial extensions with trivial action ($\theta=1$).
Moreover $Out(K)$ is then virtually free, and so $\theta$ is never injective.

However the situation is very different if $\pi/K\cong\mathbb{Z}^2$, 
$\chi(K)<0$ and $\beta_1(\pi)>2$.
For then there are epimorphisms from $\pi$ to $\mathbb{Z}^2$ 
with kernel a surface group of arbitrarily high genus \cite{Bu}.

\section{orbifolds, seifert fibrations and virtual bundles}

In this section we allow the base $B$ to be an aspherical 2-orbifold.
An {\it orbifold bundle with general fibre $F$ over $B$} is a map 
$f:M\to B$ which is locally equivalent to a projection
 ${G\backslash(F\times D^2)}\to G\backslash {D^2}$, 
where $G$ acts freely on $F$ and effectively and orthogonally on $D^2$.
We shall also say that $f:M\to{B}$ is an $F$-{\it orbifold bundle\/} (over $B$)
and $M$ is an $F$-{\it orbifold bundle space}.

A {\it Seifert fibration\/} (in dimension 4) 
is an orbifold bundle with general fibre $T$ or $Kb$.
A {\it virtual bundle space\/} is a manifold with a finite cover 
which is the total space of a bundle. 
Orbifold bundle spaces are virtual bundle spaces,
but the converse is not true.
(See page 189 of \cite{Hi}.)

Aspherical orbifold bundles (with 2-dimensional base and fibre)
are determined up to fibre-preserving diffeomorphism 
by their fundamental group sequences.
In many cases they are determined up to diffeomorphism
(among such spaces) by the group alone \cite{Vo}. 
See \cite{Hi11} for a discussion of geometries and geometric decompositions
of the total spaces of such orbifold bundles.

Johnson's trichotomy extends to groups commensurate 
with extensions of a surface group by
a surface group with trivial centre,
but it is not known whether all  torsion-free such groups are realized by aspherical 4-manifolds.

\section{extensions of groups}

The extensions $\xi$ of a group $\beta$ with kernel $\phi$
and action $\theta:\beta\to{Out}(\phi)$
(induced by conjugation in the ``ambient group" $\pi$)
may be parametrized by $H^2(\beta;\zeta\phi^\theta)$.
In general, there is an obstruction in $H^3(\beta;\zeta\phi^\theta)$ 
for there to be such an extension, 
but this obstruction group is trivial when $\beta$ is a surface group.
If $\theta$ factors through a homomorphism 
$\tilde\theta:\beta\to{Aut(\phi)}$ 
then the semidirect product $\phi\rtimes_{\widetilde\theta}\beta$ 
corresponds to $0\in{H^2}(\beta;\zeta\phi^\theta)$.
(See Chapter IV of \cite{Br}.) 

Let $p:\pi\to\beta$ be an epimorphism with kernel $\phi$ 
corresponding to such an extension $\xi$.
Then $\xi$ {\it splits\/} if there is a homomorphism $s:\beta\to\pi$
such that $ps=id_\beta$.
(If so, then $\pi$ is a semidirect product.)
This is so if and only if $\theta$ factors through $Aut(\phi)$ 
and the cohomology class $[\xi]\in{H^2}(\beta;\zeta\phi^\theta)$ 
of the extension is 0.

\begin{lemma}
If $\zeta\phi=1$ then $\xi$ splits if and only if the action $\theta$ factors through $Aut(\phi)$.
\end{lemma}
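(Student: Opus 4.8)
The plan is to read this off the criterion recalled immediately above. That criterion says $\xi$ splits if and only if $\theta$ factors through some $\widetilde\theta:\beta\to Aut(\phi)$ \emph{and} the class $[\xi]\in H^2(\beta;\zeta\phi^\theta)$ is $0$. When $\zeta\phi=1$ the coefficient module $\zeta\phi^\theta$ is trivial, so $H^2(\beta;\zeta\phi^\theta)=0$ and the second condition holds vacuously; only the requirement that $\theta$ lift to $Aut(\phi)$ survives. Thus the lemma is really just the observation that the one potential obstruction beyond lifting the action lives in a group that is zero here.

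For a direct argument, note first that one implication needs no hypothesis: if $s:\beta\to\pi$ splits $\xi$, then conjugation by the subgroup $s(\beta)$ defines a homomorphism $\beta\to Aut(\phi)$ lifting $\theta$. Conversely, when $\zeta\phi=1$ one can identify $\pi$ with a pullback: the conjugation action of $\pi$ on its normal subgroup $\phi$ gives a map $\pi\to Aut(\phi)$ sending $\phi$ isomorphically onto $Inn(\phi)$ and inducing $\theta$ on the quotients, so $\pi\cong\beta\times_{Out(\phi)}Aut(\phi)$. A section of $\xi$ is then, by the universal property of the pullback, precisely a homomorphism $\beta\to Aut(\phi)$ lying over $Out(\phi)$, i.e.\ a lift of $\theta$.

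There is no genuine obstacle; the only point to be careful about is that the existence of \emph{some} extension with action $\theta$ is not at issue (the obstruction to it would lie in $H^3(\beta;\zeta\phi^\theta)$, again zero), since $\xi$ itself is such an extension. One may also phrase the converse cohomologically rather than via the pullback: given a lift $\widetilde\theta$, the semidirect product $\phi\rtimes_{\widetilde\theta}\beta$ realises $\theta$ with class $0$, and since the extensions with action $\theta$ form a torsor over the trivial group $H^2(\beta;\zeta\phi^\theta)$ (Chapter IV of \cite{Br}), $\xi$ must be equivalent to it and hence splits.
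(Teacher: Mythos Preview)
Your primary argument---reading the result off the splitting criterion and observing that $H^2(\beta;\zeta\phi^\theta)=0$ when $\zeta\phi=1$---is exactly the paper's proof. The paper says no more than this.

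The direct pullback argument you add is not in the paper but is a genuine and correct alternative: identifying $\pi$ with $\beta\times_{Out(\phi)}Aut(\phi)$ when $\zeta\phi=1$ makes the equivalence between sections of $\xi$ and lifts of $\theta$ transparent via the universal property, bypassing the $H^2$-classification of extensions entirely. This buys a more structural explanation (and works uniformly, without needing to know in advance that extensions with given action form an $H^2$-torsor), at the cost of checking that the comparison map $\pi\to\beta\times_{Out(\phi)}Aut(\phi)$ is an isomorphism---which follows since its kernel is $\phi\cap C_\pi(\phi)=\zeta\phi=1$ and surjectivity comes from adjusting any preimage by an element of $\phi\cong Inn(\phi)$. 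Either route is fine here; the paper simply takes the shorter cohomological one.
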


\begin{proof}
If $\phi$ has trivial centre then the extension is determined by the action,
since $H^2(\beta;\zeta\phi)=0$.
Thus if the action factors $\pi$ must be a semidirect product,
i.e., $p_*$ splits.
The converse is clear.
\end{proof}

The exact sequence of low degree for the extension has the form
\[
H_2(\pi;\mathbb{Z})\to{H_2(\beta;\mathbb{Z})}\to
{H_0}(\beta;H_1(\phi;\mathbb{Z})^{\theta^{ab}})\to{H_1(\pi;\mathbb{Z})}\to
{H_1(\beta;\mathbb{Z})}\to0,
\]
where $\theta^{ab}$ is the automorphism of $\phi^{ab}$ 
induced by $\theta$.
This exact sequence is usually derived from the
homology LHS spectral sequence for the extension.
The second homomorphism in this sequence is the {\it transgression} 
$d^2_{2,0}$, 
the first non-trivial differential on page 2 of the spectral sequence.
In \S7 we shall show that when $\phi$ is abelian then $d^2_{2,0}$
is the image $\xi_*$ of $[\xi]$ under the 
change of coefficients and evaluation homomorphisms
\[
H^2(\beta;\zeta\phi^\theta)\to{H^2(\beta;H_0(\beta;\zeta\phi^\theta))}\to
{Hom}(H_2(\beta;\mathbb{Z}),H_0(\beta;\zeta\phi^\theta)).
\]
(The transgression in degree $q$ for a fibration $p:E\to{B}$ with fibre $F$ 
was originally defined as a homomorphism from a subgroup 
of $H_q(B;\mathbb{Z})$ to a quotient of $H_{q-1}(F;\mathbb{Z})$,
corresponding to the connecting homomorphism from $\pi_q(B)$ to $\pi_{q-1}(F)$
in the long exact sequence of homotopy, via the Hurewicz homomorphisms.
It may be identified with a differential on page $q$ 
of the homology spectral sequence for the fibration,
and the terminology has been dualized to cohomology 
and extended to purely algebraic situations.
See page 172 of \cite{Mc}.)

If there is a homomorphism $s:\beta\to\pi$ which splits $p$ 
then $H_i(s)$ splits $H_i(p)$, for all $i$, 
and so the five-term exact sequence above gives rise to an isomorphism
\[
\pi^{ab}\cong(\phi^{ab}/(I-\theta^{ab})\phi^{ab})\oplus\beta^{ab}=
(\phi/[\pi,\phi])\oplus\beta^{ab}.
\]
This apparently innocuous observation gives the most practical test for whether
an extension $\xi$ splits, both when $\phi'=1$, as in the next lemma,
and when $\zeta\phi=1$, as considered in \S9 below.

\begin{lemma}
Let $G$ be a group with a finitely generated abelian normal
subgroup $A$ such that $\beta=G/A$ is a $PD_2^+$-group.
Then the canonical projection from $G$ to $\beta$ 
has a section if and only if 
\[
G^{ab}\cong{A/[G,A]}\oplus\beta^{ab}.
\]
\end{lemma}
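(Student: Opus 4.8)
The plan is to combine the splitting criterion recalled just above with the exact sequence of low degree and Poincar\'e duality for $\beta$. Write $\xi$ for the extension $1\to A\to G\to\beta\to1$, and let $\theta:\beta\to Out(A)=Aut(A)$ be the conjugation action. Since $A$ is abelian this action automatically factors through $Aut(A)$, so the canonical projection $p:G\to\beta$ has a section if and only if the extension class $[\xi]\in H^2(\beta;A^\theta)$ is $0$. Because $\beta$ is a $PD_2^+$-group we have $H_2(\beta;\mathbb{Z})\cong\mathbb{Z}$, $H_1(\beta;\mathbb{Z})=\beta^{ab}\cong\mathbb{Z}^{2g}$ free abelian, and $H^2(\beta;A^\theta)\cong H_0(\beta;A^\theta)=A/[G,A]$ by Poincar\'e duality; moreover $A/[G,A]$ is finitely generated, since $A$ is.

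For the ``only if'' direction, suppose $s:\beta\to G$ splits $p$. Then $H_2(p)\circ H_2(s)=H_2(\mathrm{id}_\beta)=\mathrm{id}$, so $H_2(p):H_2(G;\mathbb{Z})\to H_2(\beta;\mathbb{Z})$ is onto, and hence the transgression $d^2_{2,0}$ is zero by exactness of
\[
H_2(G;\mathbb{Z})\to H_2(\beta;\mathbb{Z})\xrightarrow{d^2_{2,0}}A/[G,A]\to G^{ab}\to\beta^{ab}\to0.
\]
The last four terms then form a short exact sequence $0\to A/[G,A]\to G^{ab}\to\beta^{ab}\to0$, which splits because $\beta^{ab}$ is free abelian, giving $G^{ab}\cong A/[G,A]\oplus\beta^{ab}$.

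For the converse I would read the same five-term sequence in the other direction. Whatever $d^2_{2,0}$ is, the sequence exhibits $G^{ab}$ as an extension of the free abelian group $\beta^{ab}$ by $\mathrm{coker}(d^2_{2,0})$, hence $G^{ab}\cong\mathrm{coker}(d^2_{2,0})\oplus\beta^{ab}$. Comparing with the hypothesis $G^{ab}\cong A/[G,A]\oplus\beta^{ab}$ and cancelling the common free summand $\mathbb{Z}^{2g}$ (legitimate for finitely generated abelian groups) yields $\mathrm{coker}(d^2_{2,0})\cong A/[G,A]$; since a finitely generated abelian group is never isomorphic to a proper quotient of itself, $d^2_{2,0}=0$. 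To promote this to $[\xi]=0$ I would invoke the identification established in \S7: because $A$ is abelian, $d^2_{2,0}$ is the image of $[\xi]$ under $H^2(\beta;A^\theta)\to H^2(\beta;H_0(\beta;A^\theta))\to \mathrm{Hom}(H_2(\beta;\mathbb{Z}),H_0(\beta;A^\theta))$, and when $\beta$ is a $PD_2^+$-group both maps here are isomorphisms -- the first by Poincar\'e duality together with the triviality of the $\beta$-action on coinvariants, the second by the universal coefficient theorem since $H_1(\beta;\mathbb{Z})$ is free. Hence $d^2_{2,0}=0$ forces $[\xi]=0$, so $\xi$ splits and $p$ has a section.

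The main obstacle is exactly this last step: knowing not merely that the transgression vanishes, but that its vanishing pins down the extension class. That is the content of the transgression computation of \S7, specialised to a $PD_2^+$ base, where the intervening change-of-coefficients and evaluation maps degenerate into isomorphisms. Everything else is formal -- the five-term exact sequence, Poincar\'e duality for $\beta$, and the elementary facts that free summands may be cancelled in finitely generated abelian groups and that such a group is never isomorphic to a proper quotient of itself.
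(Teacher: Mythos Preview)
Your argument is correct, and it shares with the paper the key reduction step: the change-of-coefficients map $H^2(\beta;A^\theta)\to H^2(\beta;\overline A)$ (with $\overline A=A/[G,A]$) is an isomorphism by Poincar\'e duality, so the splitting of $G$ over $\beta$ is equivalent to the splitting of the \emph{central} extension $\overline G=G/[G,A]$ over $\beta$. Where you diverge is in the endgame. You run the five-term sequence, deduce $d^2_{2,0}=0$ from the abelianization hypothesis, and then appeal to Theorem~4 of \S7 (plus the universal-coefficient isomorphism) to conclude $[\xi]=0$. The paper instead finishes without invoking \S7: once the extension is central, the hypothesis forces the natural map $\overline A\to\overline G^{ab}$ to be injective (by your cancellation and ``no proper quotient'' observations), and since $\beta^{ab}$ is free one obtains a homomorphic retraction $\overline G\to\overline G^{ab}\to\overline A$ whose kernel is the desired section.

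Both routes are valid; the trade-off is that the paper's argument is self-contained and elementary at this point in the exposition, whereas yours imports the transgression identification of \S7. Since Theorem~4 is proved independently of this lemma there is no circularity, but you should flag the forward reference. Conversely, your approach makes explicit exactly \emph{why} the abelianization detects the extension class --- namely, that for a $PD_2^+$ base the evaluation map $H^2(\beta;\overline A)\to\mathrm{Hom}(H_2(\beta;\mathbb Z),\overline A)$ is an isomorphism --- which the paper's terse final sentence leaves to the reader.
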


\begin{proof}
Let $\theta:\beta\to{Aut}(A)$ be the action induced by conjugation in $G$.
Let $\overline{A}=A/[G,A]$ and $\overline{G}=G/[G,A]$.
Then $\overline{G}$ is a central extension of $\beta$ by $\overline{A}$,
and $G^{ab}=\overline{G}^{ab}$.
Since $c.d.\beta=2$, the epimorphism from $A$ to $\overline{A}$
induces an epimorphism from $H^2(\beta;A^\theta)$ to $H^2(\beta;\overline{A})$.
Since $\beta$ is a $PD_2^+$-group, 
$H^2(\beta;A^\theta)\cong{H_0(\beta;A^\theta)}$
and $H^2(\beta;\overline{A})\cong{H_0(\beta;\overline{A})}$.
These are each isomorphic to $\overline{A}$,
and so the natural homomorphism  from
$H^2(\beta;A^\theta)$ to $H^2(\beta;\overline{A})$ is an isomorphism.
Therefore $G$ splits as a semidirect product if and only if 
the same is true for $\overline{G}$.
Since $\overline{A}$ is central in $\overline{G}$, 
this is so if and only if $\overline{G}\cong\overline{A}\times\beta$,
and this is equivalent to
$\overline{G}^{ab}\cong\overline{A}\oplus\beta^{ab}$. 
\end{proof}

\section{the extension class for actions which lift}

Let $p:\pi\to\beta$ be an epimorphism with kernel $\phi$,
and suppose that
$\beta$ has a finite presentation $\langle{X}|R\rangle$, 
with associated epimorphism $q:F(X)\to\beta$.
Let $\varepsilon:\mathbb{Z}[\beta]\to\mathbb{Z}$ be the augmentation homomorphism,
and let $\partial_x:\mathbb{Z}[F(X)]\to\mathbb{Z}[\beta]$ be the composite
of the Fox free derivative with the linear extension of $q$, for each $x\in{X}$.
Then $\varepsilon_x(v)=\varepsilon\partial_x(v)$ is the exponent sum of $x$ 
in the word $v$.
The presentation determines a Fox-Lyndon partial resolution 
\[
C_*^{FL}(\beta):\quad \mathbb{Z}[\beta]^R\to\mathbb{Z}[\beta]^X\to\mathbb{Z}[\beta]\to\mathbb{Z}\to0,
\] 
where the differentials map the basis elements by
\[
\partial{c_0}=1,\quad\partial{c_1^x}=q(x)-1\quad\mathrm{ and}\quad \partial{c_2^r}=\Sigma_{y\in{X}}\partial_yr.c_1^y,
\] for all $r\in{R}$ and $x\in{X}$.

Let $h_*:C_*^{FL}(\beta)\to{C_*^{bar}}$ be the chain morphism to
the standard (bar) resolution $C_*^{bar}$ given by 
the identity on $C_0^{FL}(\beta)=\mathbb{Z}[\beta]=C_0^{bar}$,
and which sends the basis elements $c_21^x$ of 
$C_1^{FL}(\beta)=\mathbb{Z}[\beta]^X$ and
$c_2^r$ of $C_2^{FL}(\beta)=\mathbb{Z}[\beta]^R$ 
to $[q(x)]$ and to $\Sigma_{x\in{X}}[\partial_xr|q(x)]\in{C_2^{bar}}$, 
respectively, for all $x\in{X}$ and $r\in{R}$.
(Here the symbol $[-|-]$ is extended to be additive in its first argument.
See Exercises II.5.3 and  II.5.4 of \cite{Br}.)
If $c.d.\beta\leq2$ then $C_*^{FL}(\beta)$ is a resolution and $h$ is a chain homotopy equivalence.

Suppose that $\theta$ factors through a homomorphism  $\widetilde\theta:\beta\to{Aut}(\phi)$.
Let $\sigma:\beta\to\pi$ be a set-theoretic section such that $\sigma(1)=1$
and $c_{\sigma(g)}=\widetilde\theta(g)$, for all $g\in\beta$,
and define a function $f:\beta^2\to\pi$ by
\[
\sigma(g)\sigma(h)=f([g|h])\sigma(gh),\quad\mathrm{for~all}~g,h\in\beta.
\]
Then $f([g|h])$ is in $\zeta\phi$, for all $g,h\in\beta$,
since $\widetilde\theta$ is a homomorphism.
The linear extension $f:C_2^{bar}\to\zeta\phi$ is a 2-cocycle,
which represents  the extension class $[\xi]\in{H^2}(\beta;\zeta\phi)$.
(See \S3 and \S6 of Chapter IV of \cite{Br}.)

For each $r=\Pi_{i=1}^cx_i^{\eta(i)}$ in $R$, with $\eta(i)=\pm1$,
let $I_k(r)=\Pi_{i=1}^{k-1}x_i^{\eta(i)}$, for $1\leq{k}\leq{c}$.
(If $k=1$ this is the empty product: $I_1(r)=1$.)
Then $\partial_xr=\Sigma_{x_i=x}\eta(i)q(I_i(r)x_i^{\delta(i)})$, 
where $\delta(i)=0$ if $\eta(i)=1$ and $\delta(i)=-1$ if $\eta(i)=-1$,
for all $x\in{X}$.
Hence\[
f(h_2(c_2^r))=f(\Sigma_{x\in{X}}[\partial_xr|q(x)])=
f(\Sigma_{i=1}^c[\eta(i)q(I_i(r)x_i^{\delta(i)})|q(x_i)])
\]
\[=
\Sigma_{i=1}^c\eta(i)f([q(I_i(r)x_i^{\delta(i)})|q(x_i)]).
\]
As this lies in $\pi$, we may write it multiplicatively as
\[
\Pi_{i=1}^cf([q(I_i(r)x_i^{\delta(i)})|q(x_i)])^{\eta(i)}.
\]
On the other hand, if ${s:F(X)\to\pi}$ is the homomorphism defined by $s(x)=\sigma(q(x))$, 
for all $x\in{X}$, then
\[s(r)=\Pi_{i=1}^cs(x_i)^\eta(i)=\Pi_{i=1}^c\sigma(q(x_i))^{\eta(i)}.
\]
A finite induction shows that this product equals 
\[
(\Pi_{i=1}^cf(q([I_i(r)x_i^{\delta(i)})|q(x_i)])^{\eta(i)})\sigma(q(r))
=\Pi_{i=1}^cf([q(I_i(r)x_i^{\delta(i)})|q(x_i)])^{\eta(i)}).
\]
(Note that $\sigma(g)\sigma(g^{-1})=f([g|g^{-1}])$,
and so $\sigma(g)^{-1}=f([g|g^{-1}])^{-1}\sigma(g^{-1})$,
for all $g\in\beta$.
The calculations simplify if the exponents $\eta(i)$ are all positive.)
Hence $s(r)=f(h_2(c_2^r))$, and so is in $\zeta\phi$, for all $r\in{R}$.
It follows that 
\[
\xi_*([z])=s(\Pi{r^{n_r}}),
\]
for any 2-cycle $z=\Sigma_{r\in{R}}{n_r}c^r_2$ of 
$\mathbb{Z}\otimes_\beta{C_*^{FL}(\beta)}$.

Suppose now that $\beta$ is a $PD_2$-group with 
1-relator presentation $\langle{X}|r\rangle$ 
and orientation character $w=w_1(\beta):\beta\to\mathbb{Z}^\times$.
Let $\varepsilon_w:\mathbb{Z}[\beta]\to\mathbb{Z}$ be the 
linear extension of $w$,
and let $J_w=\mathrm{Ker}(\varepsilon_w)$.
If $A$ is any left $\mathbb{Z}[\beta]$-module then
$H^2(\beta;A)\cong{A}/(\partial_xr|x\in{X})A$, since $c.d.\beta=2$.
This is isomorphic to $H_0(\beta;\overline{A})={A}/J_wA$, 
by Poincar\'e duality,
and so $J_w$ is also the ideal generated by $\{\partial_xr|x\in{X}\}$.
Then we may recapitulate the above discussion as follows.

\begin{lemma}
If $\theta$ factors through $Aut(\phi)$ then $s(r)$ is in $\zeta\phi$, 
and its image $[s(r)]\in\zeta\phi/J_w\zeta\phi$ is well defined.
The epimorphism $p_*$ splits if and only if $[s(r)]=0$.
\end{lemma}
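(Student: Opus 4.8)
The plan is to read this lemma as a recapitulation of the computation carried out immediately above, specialized to a one-relator $PD_2$ presentation $\langle X\mid r\rangle$, and then to combine it with the splitting criterion recalled earlier. First I would observe that, since $\theta$ factors through a homomorphism $\widetilde\theta:\beta\to Aut(\phi)$, the function $f$ built from the set-theoretic section $\sigma$ takes values in $\zeta\phi$, and its linear extension $f:C_2^{bar}\to\zeta\phi$ is a $2$-cocycle representing $[\xi]\in H^2(\beta;\zeta\phi^\theta)$. Because $q(r)=1$, the finite induction displayed above yields $s(r)=f(h_2(c_2^r))\in\zeta\phi$, which settles the first assertion.

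Next I would identify the coset $[s(r)]$ with the image of $[\xi]$ under a comparison isomorphism. Since $c.d.\,\beta\le 2$ and the presentation has a single relator, $C_*^{FL}(\beta)$ is a length-$2$ free resolution of $\mathbb{Z}$ with $C_2^{FL}(\beta)=\mathbb{Z}[\beta]$, and $h:C_*^{FL}(\beta)\to C_*^{bar}$ is a chain homotopy equivalence. Applying $\mathrm{Hom}_\beta(-,\zeta\phi^\theta)$, the cochain $f\circ h_2$ is the pull-back of the cocycle $f$, so its class in $H^2(\mathrm{Hom}_\beta(C_*^{FL}(\beta),\zeta\phi^\theta))$ is the image of $[\xi]$ under the isomorphism $h^*$. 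Identifying $\mathrm{Hom}_\beta(C_2^{FL}(\beta),\zeta\phi^\theta)$ with $\zeta\phi$, the coboundary subgroup is $(\partial_x r\mid x\in X)\zeta\phi=J_w\zeta\phi$, by the description of $J_w$ recorded just before the lemma. Hence $H^2(\beta;\zeta\phi^\theta)\cong\zeta\phi/J_w\zeta\phi$ sends $[\xi]$ to $[s(r)]$. In particular $[s(r)]$ depends only on $\xi$: replacing $\sigma$ (or the lift $\widetilde\theta$) changes $f$ by a coboundary, hence changes $s(r)$ only within its coset modulo $J_w\zeta\phi$, which gives the well-definedness claim.

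Finally I would invoke the splitting criterion recalled earlier: under the standing hypothesis that $\theta$ factors through $Aut(\phi)$, the epimorphism $p_*$ splits if and only if $[\xi]=0$ in $H^2(\beta;\zeta\phi^\theta)$. Transporting this across the isomorphism of the previous paragraph, it holds if and only if $[s(r)]=0$ in $\zeta\phi/J_w\zeta\phi$, as asserted.

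The step I expect to demand the most care is the bookkeeping in the second paragraph: checking that $f\circ h_2$ genuinely represents $h^*[\xi]$ (with no stray sign or coboundary error), and that the image of the transposed differential is precisely the ideal $J_w$ acting on $\zeta\phi$, i.e. that the presentation-theoretic description $J_w=(\partial_x r\mid x\in X)$ agrees with the Poincar\'e-duality one. Both facts are essentially in place from the discussion preceding the lemma, so the task is to assemble them rather than prove anything new; the only genuine subtlety is that faithful detection of the vanishing of $[\xi]$ relies on $h$ being a homotopy equivalence, which uses $c.d.\,\beta\le 2$.
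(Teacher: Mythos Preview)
Your proposal is correct, but it takes a more cohomological route than the paper. The paper argues directly: for well-definedness it compares two admissible sections $\sigma,\sigma'$ (both with $c_{\sigma(g)}=\widetilde\theta(g)$), writes $s'(x)=u(q(x))s(x)$ for some $u:q(X)\to\zeta\phi$, and checks by induction on the word $r$ that $s'(r)=s(r)+\sum_{x\in X}\partial_x r\cdot u(q(x))$, so the coset modulo $J_w\zeta\phi$ is unchanged. For the splitting criterion it observes that if $\sigma$ is an actual homomorphism then $s=\sigma q$ gives $s(r)=\sigma(1)=1$; conversely, if $[s(r)]=0$ one uses the same formula to adjust $\sigma$ so that $s(r)=1$, whence $s$ descends through $F(X)/\langle\langle r\rangle\rangle=\beta$ and splits $p_*$. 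Your argument instead identifies $[s(r)]$ with $[\xi]$ under the isomorphism $H^2(\beta;\zeta\phi^\theta)\cong\zeta\phi/J_w\zeta\phi$ obtained by pulling $f$ back along the chain homotopy equivalence $h$, and then transports the standard criterion $[\xi]=0\Leftrightarrow p_*$ splits. Your approach is conceptually cleaner and explains the invariant's natural home, but it relies on $c.d.\,\beta\le2$ (so that $h$ is a genuine homotopy equivalence) and on the extension-class splitting criterion from \cite{Br}; the paper's argument is entirely self-contained and makes the dependence on the Fox derivatives explicit, which feeds directly into the concrete computations of \S8.
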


\begin{proof}
The first assertion holds since $q(r)=1$.
If $\sigma'$ is another such set-theoretic section and $s'=\sigma'q$ 
then $s'(x)=u(q(x))s(x)$ for some function $u:q(X)\to\zeta\phi$. 
(Conversely, every such function $u$ arises in this way.)
A simple induction shows that $s'(r)=s(r)+\Sigma_{x\in{X}}\partial_xr.u(q(x))$,
and so $[s(r)]$ is independent of the choice of section.

If $\sigma:\beta\to\pi$ splits $p_*$ then we may take $s=\sigma{q}$,
and so $s(r)=1$ in $\phi$. Hence $[s(r)]=0$.
Conversely, if $[s(r)]=0$ then we may choose $s$ so that $s(r)=1$,
and so $p_*$ splits.
\end{proof}

\section{abelian extensions and transgression}

We shall show that if $\xi$ is an extension with abelian kernel $\phi$
then the transgression homomorphism $d^2_{2,0}$
in the associated exact sequence of low degree is the image 
of the class $[\xi]$ in $Hom(H_2(\beta;\mathbb{Z}),H_0(\beta;\zeta\phi))$.
The significance of this result is that it is often useful to have 
``explicit" expressions for homomorphisms defined via the 
machinery of homological algebra.
The result appears to be ``folklore", 
but we have not found a published proof.
(Theorem 4 of \cite{HS} gives the cohomological analogue.)
Our argument uses naturality of the constructions arising (cf. \cite{Ba,Be}) 
to reduce to a special case.

\begin{theorem}
Let $\xi$ be an extension of a group $\beta$ with abelian kernel $\phi$.
Then $d^2_{2,0}=\xi_*~\mathrm{:}$ the first non-trivial differential 
on page 2 of the LHS spectral sequence for the extension 
is the homomorphism given by evaluation of the extension class.
\end{theorem}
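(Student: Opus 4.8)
The plan is to prove the identity by interpreting both maps functorially and then reducing to a single universal example, following the strategy hinted at in the introduction (cf. \cite{Ba,Be}). \textbf{First I would reduce to the central case.} Given $\xi:1\to\phi\to\pi\to\beta\to1$ with $\phi$ abelian and action $\theta$, form the pushout extension $\bar\xi:1\to\bar\phi\to\bar\pi\to\beta\to1$ along the coinvariant quotient $\phi\to\bar\phi=H_0(\beta;\phi^\theta)=\phi/[\pi,\phi]$; here $\bar\pi=\pi/[\pi,\phi]$, exactly the $\overline{G}$ of the lemma in \S6, and $\bar\phi$ is central in $\bar\pi$. The morphism of extensions $\xi\to\bar\xi$ induces a morphism of LHS spectral sequences which is the identity on $E^2_{2,0}=H_2(\beta;\mathbb Z)$ (the fibre coefficients $H_0$ are $\mathbb Z$ on both sides) and the canonical identification on $E^2_{0,1}$ (namely $H_0(\beta;\phi^\theta)=\bar\phi=H_0(\beta;\bar\phi)$); hence $d^2_{2,0}(\xi)=d^2_{2,0}(\bar\xi)$. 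On the other hand $[\bar\xi]$ is by construction the image of $[\xi]$ under the coefficient map $H^2(\beta;\phi^\theta)\to H^2(\beta;\bar\phi)$, which is precisely the first arrow in the definition of $\xi_*$, and since $\bar\phi$ already has trivial action $\bar\xi_*$ is just $[\bar\xi]$ evaluated on $H_2(\beta;\mathbb Z)$; thus $\xi_*=\bar\xi_*$. So it suffices to treat central extensions, and I write $A=\phi$.

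\textbf{Next I would set up the universal central example.} The extension is classified by a class in $H^2(\beta;A)=[K(\beta,1),K(A,2)]$, represented by a map of spaces $f\colon K(\beta,1)\to K(A,2)$, and the fibration $K(A,1)\to K(\pi,1)\to K(\beta,1)$ is the pullback along $f$ of the path--loop fibration $K(A,1)\to PK(A,2)\to K(A,2)$, whose total space is contractible. The LHS spectral sequence of $\xi$ is the Serre spectral sequence of this fibration of Eilenberg--Mac Lane spaces. In the universal fibration one has $E^2_{p,q}=H_p(K(A,2);H_q(K(A,1);\mathbb Z))$, and in total degrees $1$ and $2$ the only nonzero terms are $E^2_{2,0}=H_2(K(A,2);\mathbb Z)\cong A$ (Hurewicz), $E^2_{0,2}=H_2(A;\mathbb Z)$ and $E^2_{0,1}=H_1(K(A,1);\mathbb Z)=A$, since $H_1(K(A,2);\mathbb Z)=0$ and $H_1(K(A,2);A)=0$. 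Because the total space is contractible, a short diagram chase gives $E^\infty_{2,0}=\ker d^2_{2,0}=0$ and $E^\infty_{0,1}=\operatorname{coker}d^2_{2,0}=0$, so the transgression $d^2_{2,0}\colon A\to A$ of the universal fibration is an isomorphism. By naturality in $A$ it is a natural automorphism of the identity functor on abelian groups, hence $\pm 1$; a single explicit computation (e.g. $A=\mathbb Z$, $K(\mathbb Z,1)=S^1$, $K(\mathbb Z,2)=\mathbb{C}P^\infty$), or comparison with the conventions on p.~172 of \cite{Mc}, fixes the sign so that $d^2_{2,0}$ is the canonical identification $A\to A$.

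\textbf{Then I would conclude by naturality of the Serre spectral sequence} applied to the pullback square expressing the fibration of $\xi$ as $f^{*}$ of the path--loop fibration: the transgression of $\xi$ is the composite
\[
H_2(\beta;\mathbb Z)\xrightarrow{\ f_*\ }H_2(K(A,2);\mathbb Z)\cong A\xrightarrow{\ =\ }A=H_1(K(A,1);\mathbb Z),
\]
that is, $d^2_{2,0}(\xi)=f_*$ under the Hurewicz identification. Finally $[\xi]=f^{*}\iota$, where $\iota\in H^2(K(A,2);A)\cong\mathrm{Hom}(H_2(K(A,2);\mathbb Z),A)$ is the fundamental class corresponding to $\mathrm{id}_A$; so for $x\in H_2(\beta;\mathbb Z)$ we get $\xi_*(x)=\langle f^{*}\iota,x\rangle=\langle\iota,f_*x\rangle=f_*x=d^2_{2,0}(\xi)(x)$, which is the assertion. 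As a consistency check, when $\beta$ is finitely presented with $\mathrm{cd}\,\beta\le2$ one can instead read $d^2_{2,0}$ off the Fox--Lyndon resolution and the chain map $h$ of \S6 and recover exactly the formula $\xi_*([z])=s(\prod r^{n_r})$ obtained there.

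\textbf{The main obstacle} I expect is pinning down the transgression of the universal path--loop fibration as the \emph{canonical} identification $A\to A$ \emph{with the correct sign}: that it is an isomorphism is immediate from contractibility of the total space, but matching the sign built into the definition of $\xi_*$ with the sign conventions of the homological transgression requires one honest computation. Everything else is bookkeeping with the naturality of the Serre (equivalently LHS) spectral sequence under pullback of fibrations and under change of the fibre group, together with the routine verification in the first step that the morphism of spectral sequences $E(\xi)\to E(\bar\xi)$ is the identity in the relevant bidegrees.
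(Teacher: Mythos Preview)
Your proposal is correct and would yield a complete proof, but it takes a genuinely different route from the paper's. Both arguments begin with the same first reduction (pass to the quotient $\bar\pi=\pi/[\pi,\phi]$ to make the kernel central), but then diverge. The paper stays algebraic: it uses the fact that every class in $H_2(\beta;\mathbb{Z})$ is the image of the fundamental class of an orientable surface group (citing \cite{Zi}), reduces further via a degree-$1$ map to the case $\beta=\mathbb{Z}^2$, $\phi=\mathbb{Z}$, $\pi=F(2)/F(2)_{[3]}$ (the Heisenberg group), and then carries out an explicit bicomplex computation with the Fox--Lyndon resolution of \S6 to identify $d^2_{2,0}$ with $\eta_{1*}$. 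You instead work topologically: you realise the LHS spectral sequence as the Serre spectral sequence of $K(\phi,1)\to K(\pi,1)\to K(\beta,1)$, pull back from the universal path--loop fibration $K(A,1)\to PK(A,2)\to K(A,2)$, and exploit contractibility of the total space together with naturality in $A$ to pin the universal transgression down to $\pm\mathrm{id}_A$.

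What each approach buys: the paper's argument is self-contained within the algebraic machinery already set up in \S6, and its explicit endpoint (the Heisenberg computation) ties directly to the formula $\xi_*([z])=s(\Pi r^{n_r})$ used elsewhere in the paper. Your argument is more conceptual and avoids the ``every $H_2$-class comes from a surface'' step entirely, at the cost of importing the homotopy-theoretic apparatus of Eilenberg--Mac\,Lane spaces and the Hurewicz identification $H_2(K(A,2);\mathbb{Z})\cong A$. Both arguments face the same residual sign issue, and both resolve it by one honest calculation: the paper's in the Heisenberg group, yours in the Hopf-type fibration $S^1\to *\to\mathbb{C}P^\infty$. Your acknowledgement that this is ``the main obstacle'' is fair; the computation is standard but must actually be done (or a precise reference given) for the proof to be complete.
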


\begin{proof}
We shall reduce to the situation when $\beta\cong\mathbb{Z}^2$,
$\phi$ is infinite cyclic and central, and $[\xi]$ generates 
$H^2(\beta;\mathbb{Z})\cong\mathbb{Z}$.
(In this case $\pi\cong{F(2)/F(2)_{[3]}}$ is the free 2-generator 
nilpotent group of class 2.)

Let $\bar\xi$ be the extension $0\to\phi/[\pi,\phi]\to\pi/[\pi,\phi]\to\beta\to1$
obtained by factoring out  $[\pi,\phi]$.
Then the projection of $\pi$ onto $\pi/[\pi,\phi]$ induces isomorphisms of the
5-term exact sequences corresponding to the extensions $\xi$ and $\bar\xi$, 
and so we may assume that $\phi$ is central.

Secondly, 
every class in $H_2(\beta;\mathbb{Z})$ is the image 
of the fundamental class of an orientable surface.
(This is most easily seen topologically, 
by assembling pairwise the 2-simplices of a representative 2-cycle
for $H_2(K(\beta,1);\mathbb{Z})$, or by using orientable bordism,
since $\Omega_2(X)=H_2(X;\Omega_0)$ for any cell complex $X$.
However, there is an algebraic argument in \cite{Zi}.)
Thus if  $[z]\in{H_2}(\beta;\mathbb{Z})$ there is a $PD_2^+$-group $\hat\beta$ with fundamental class $[\hat\beta]\in{H_2}(\hat\beta;\mathbb{Z})$ and a
homomorphism  $f:\hat\beta\to\beta$ such that $f_*([\hat\beta])=[z]$.
On passing to the extension $f^*\xi$, we may assume that $\beta$ is a $PD_2^+$-group.
 
Thirdly, suppose that $\beta$ is the $PD_2^+$-group of genus $g$
and $\eta_g$ is the central extension of $\beta$ by $\phi=\mathbb{Z}$
corresponding to a generator of $H^2(\beta;\mathbb{Z})$.
It is easy to see that $d^2_{2,0}$ and $\eta_{g*}$ are isomorphisms of infinite cyclic groups,
and so $d^2_{2,0}=\pm\eta_{g*}$.
There is a natural morphism of extensions from $\eta_g$ to $\xi$, 
and it follows that $d^2_{2,0}=\pm\xi_*$ whenever $\beta$ is a $PD_2^+$-group.

This is enough to show that $d^2_{2,0}([z])=\pm\xi_*([z])$ in general.
However to prove the theorem we must actually calculate $d^2_{2,0}$ 
for the special case $\eta_g$.
This  is a little tedious, but is not difficult.
We shall make one more reduction.
Let $h:\beta\to\mathbb{Z}^2$ be a degree-1 homomorphism.
Then $\eta_g=\pm{h^*\eta_1}$, 
and so it is enough to prove the claim for $\eta_1$.

Although $\pi=F(2)/F(2)_{[3]}$ has a presentation 
$\langle{x,y}\mid{x,y\leftrightharpoons[x,y]}\rangle$,
we shall use
\[
\langle{u,x,y}\mid{u[x,y],}~[u,x],~[u,y]\rangle
\]
instead.
We shall use the same notation $x$ and $y$ for generators of 
$\beta=\pi/\langle[x,y]\rangle=\mathbb{Z}^2$, for simplicity of reading.
Let $r$, $s$ and $t$ denote the relators $u[x,y]$, $[u,x]$ and $[u,y]$,
respectively.

Let $\Gamma=\mathbb{Z}[\pi]$ and $\Lambda=\mathbb{Z}[\beta]$.
The presentation of $\pi$ determines a (partial) resolution
\[P_*: \Gamma^3\to\Gamma^3\to\Gamma\to\mathbb{Z}\to0
\]
of the $\mathbb{Z}[\pi]$-augmentation module,
with bases $p_1^u$, $p_1^x$ and $p_1^y$ for $P_1$,
and $p_2^r$, $p_2^s$ and $p_2^t$ for $P_2$.
Let $\overline{P}_*=\Lambda\otimes_\Gamma{P_*}$, and
let $\partial'$ and $\overline{\partial}''$ be the differentials 
of $C_*^{FL}(\beta)$ and $\overline{P}_*$, respectively.

The homology LHS for the extension is based on the bicomplex 
$K_{p,q}=C_p^{FL}(\beta)\otimes_\Lambda\overline{P}_q$,
with differential $d=\partial'\otimes1+(-1)^p1\otimes\overline{\partial}''$.
(Since $\Lambda$ is commutative we may view the left module structure on
$C_*^{FL}(\beta)$ as also being a right module structure.)
The associated total complex $K^{tot}_n=\oplus _{p+q=n}K_{p,q}$
has an increasing filtration $F_iK^{tot}_*$,
given by 
\[
F_0K^{tot}_n=K_{0,n},\quad{F_1K^{tot}_n=K_{0,n}}\oplus{K_{1,n-1}}\quad
\mathrm{ and}\quad{F_2K^{tot}_n=K^{tot}_n}.
\]
(See Chapter VII of \cite{Br}. )

The generator of $H_2(\beta;\mathbb{Z})$ is represented 
by the 2-cycle $c^{[x,y]}_2$ in $C^{FL}_2(\beta)$.
Let $z=c^{[x,y]}_2\otimes1-c_1^x\otimes{p_1^y}+c_1^y\otimes{p_1^x}$.
Then $z\in{Z^2_{2,0}}$, and $z$ represents a generator of
$E^2_{2,0}=H_2(C_*^{FL}(\beta)\otimes_\beta\mathbb{Z})=H_2(\beta;\mathbb{Z})$.
Now 
\[
d^2_{2,0}([z])=[(1-x)p_1^y+(y-1)p_1^x]
\]
in
${E^2_{0,1}}=H_0(\beta;H_1(\overline{P}_*))\cong\phi.$
On the other hand, 
\[
\overline{\partial}''p_2^r=\partial_uu[x,y]p_1^u+\partial_xu[x,y]p_1^x
+\partial_yu[x,y]p_1^y
\]
\[
=p_1^u+(1-xyx^{-1})p_1^x+(x-1)p_1^y=p_1^u+(1-y)p_1^x+(x-1)p_1^y,
\]
since $u=1$ and $uxyx^{-1}=y$ in $\Lambda$. 
Thus 
\[
d^2_{2,0}([z])=[p_1^u-\overline{\partial}''p_2^r]=[p_1^u],
\]
which corresponds to the generator $u$ of $\phi$.
Hence $d^2_{2,0}=\eta_{1*}$, proving the theorem.
\end{proof}

It is easy to see that  $\mathrm{Ker}(d^2_{2,0})\leq\mathrm{Ker}(\xi_*)$
and $\mathrm{Im}(\xi_*)\leq\mathrm{Im}(d^2_{2,0})$, 
without such reductions or calculation.
The diagonal $\Delta:\pi\to{\pi\times_\beta\pi}$ splits the pullback 
$p^*\xi$ of $\xi$ over $p$.
Hence $p^*[\xi]=0$, 
and so 
\[
\mathrm{Ker}(d^2_{2,0})=\mathrm{Im}(H_2(p))\leq\mathrm{Ker}(\xi_*).
\]
On the other hand,
if $z=\Sigma_{r\in{R}}{n_r}c^r_2$ is a 2-cycle of
$\mathbb{Z}\otimes_\beta{C_*^{FL}(\beta)}$, then
$\Sigma{n_r}\varepsilon_x(r)=0$, for all $x\in{X}$.
Let $j$ be the inclusion of $\phi$ as a subgroup of $\pi$.
Since the exponent sum of $x$ in $\Pi{r^{n_r}}$ is 0,
for each $x\in{X}$, the image $js(\Pi{r^{n_r}})$ is in $\pi'$.
Since $\xi_*(z)=s(\Pi{r^{n_r}})$,
we see that 
\[
\mathrm{Im}(\xi_*)\leq\mathrm{Ker}(H_1(j))=\mathrm{Im}(d^2_{2,0}).
\]

\section{surface bundles with flat fibre}

When the base is an orientable surface and the fibre $F$ is the torus $T$ or 
the Klein bottle $Kb$ the class $[s(r)]$ is the only obstruction to a section.

\begin{theorem}
Let $p:E\to{B}$ be a bundle with base $B$ a surface and fibre $F=T$.
Let $\beta=\pi_1(B)$ and $\phi=\mathbb{Z}^2$,
and let $\theta$ be the action determined by conjugation in $\pi_1(E)$.
Then $p$ has a section if and only if  $[s(r)]=0$.
If $B$ is orientable  then $p$ has a section if and only if
$H_1(E;\mathbb{Z})\cong{H_0(B;H_1(F;\mathbb{Z}))}\oplus{H_1(B;\mathbb{Z})}$.
The $\phi$-conjugacy classes of sections are parametrized 
by $H^1(\beta;\phi^\theta)$.
\end{theorem}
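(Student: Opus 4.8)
The plan is to reduce each of the three assertions to results already established. The key preliminary point is that since $F=T$ the kernel $\phi=\mathbb{Z}^2$ is abelian, so $Inn(\phi)=1$, whence $Out(\phi)=Aut(\phi)=GL_2(\mathbb{Z})$ and $\zeta\phi=\phi$; in particular the action $\theta$ always factors through $Aut(\phi)$, so the sole obstruction to splitting $\xi(p)$ is its extension class. Moreover $E$ and $B$ are aspherical (from the exact sequence of homotopy, $\pi_n(E)=0$ for $n\geq2$, or by Theorem~3.5.1), so $E=K(\pi,1)$, $B=K(\beta,1)$, and a section of $p$ exists if and only if $p_*:\pi\to\beta$ admits a group-theoretic splitting. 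Since $\beta=\pi_1(B)$ is a $PD_2$-group with a one-relator presentation $\langle X\mid r\rangle$ and orientation character $w$, Lemma~7 applies: $s(r)\in\zeta\phi=\phi$, its class $[s(r)]\in\phi/J_w\phi$ is well defined, and $p_*$ splits exactly when $[s(r)]=0$. This yields the first assertion.

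For the second assertion I would apply Lemma~6 with $G=\pi$ and $A=\phi\cong\mathbb{Z}^2$, a finitely generated abelian normal subgroup with $G/A=\beta$. When $B$ is orientable $\beta$ is a $PD_2^+$-group, so Lemma~6 gives a splitting of $p_*$ -- equivalently, by the first paragraph, a section of $p$ -- precisely when $\pi^{ab}\cong(\phi/[\pi,\phi])\oplus\beta^{ab}$. It then remains to identify the summands: $\pi^{ab}=H_1(E;\mathbb{Z})$ and $\beta^{ab}=H_1(B;\mathbb{Z})$, while, $\phi$ being abelian, $[\pi,\phi]=(I-\theta)\phi$, so $\phi/[\pi,\phi]=H_0(\beta;\phi^\theta)$; and since $H_1(T;\mathbb{Z})=\mathbb{Z}^2=\phi$ carries exactly the monodromy action $\theta$, this group is $H_0(B;H_1(F;\mathbb{Z}))$.

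For the last assertion I would suppose a section exists, so that $\pi\cong\phi\rtimes_\theta\beta$. Writing elements of the semidirect product as pairs, a splitting $s:\beta\to\pi$ must have the form $s(g)=(\delta(g),g)$, and the homomorphism condition says precisely that $\delta$ is a crossed homomorphism, so splittings correspond bijectively to $Z^1(\beta;\phi^\theta)$. Conjugating a splitting by an element of $\phi$ alters $\delta$ by a principal crossed homomorphism, and every such alteration so arises; hence the $\phi$-conjugacy classes of splittings are parametrized by $H^1(\beta;\phi^\theta)$. Passing back through the asphericity correspondence of the first paragraph, in its refined form matching $\phi$-conjugacy classes of splittings of $p_*$ with vertical homotopy classes of sections of $p$, gives the stated parametrization.

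The step I expect to require the most care is not any isolated computation but the bookkeeping that keeps the several notions of ``section'' aligned -- a topological section of $p$, a splitting of $p_*$ up to $\phi$-conjugacy, vanishing of $[\xi]$, and vanishing of $[s(r)]$ -- together with the identification of $\phi/[\pi,\phi]$ with $H_0(B;H_1(F;\mathbb{Z}))$ carrying the correct action. Each equivalence must be invoked with its hypotheses genuinely in force: abelian kernel so that $\theta$ automatically lifts, a one-relator $PD_2$ base for Lemma~7, and an orientable base for Lemma~6. The substantive homological content, however, is already packaged in \S6 and in Lemmas~6 and~7, so once these translations are in place the argument is routine.
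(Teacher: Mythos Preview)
Your proof is correct and follows the paper's structure: reduce the first assertion to Lemma~3 (your ``Lemma~7''), the second to Lemma~2 (your ``Lemma~6''), and the third to the standard $H^1$ classification of splittings (the paper simply cites Proposition~IV.2.3 of Brown). The one substantive difference is in how you pass from ``$p_*$ splits'' to ``$p$ has a section'': you argue via asphericity of $E$ and $B$ together with the homotopy lifting property, whereas the paper instead notes that $Aut(\phi)=GL(2,\mathbb{Z})$ acts on $T=\mathbb{R}^2/\mathbb{Z}^2$ by \emph{based} homeomorphisms, so every semidirect product $\mathbb{Z}^2\rtimes_\theta\beta$ is realized by a $T$-bundle with a section, and then invokes Theorem~5.2 (extensions determine bundles). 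Your route is more homotopy-theoretic and works for any aspherical fibre; the paper's is geometric and specific to $T$, but avoids the need to promote a homotopy section to a genuine one.
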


\begin{proof}
Since the action of $Aut(\phi)=GL(2,\mathbb{Z})$ on $\phi$
is induced by the natural (based) action of $GL(2,\mathbb{Z})$ 
on $T=\mathbb{R}^2/\mathbb{Z}^2$,
every semidirect product $\mathbb{Z}^2\rtimes_\theta\beta$
is realized by a $T$-bundle over $B$ with a section.
Therefore $p$ has a section if and only if $p_*$ splits,
since bundles are determined by the associated extensions.
This in turn holds if and only if $[s(r)]=0$, since $Aut(\phi)=Out(\phi)$.

The second assertion follows from Lemma 2.

If $p_*$ splits and $\sigma$ and $\sigma'$ are two sections 
then $\sigma'(g)\sigma(g)^{-1}$ is in $\phi$, for all $g\in\beta$.
Therefore the sections are parametrized (up to conjugation by an element of $\phi$) by $H^1(\beta;\phi^\theta)$.
(See Proposition IV.2.3 of \cite{Br}.)
\end{proof}

If $p$ has a section then so does the pullback over $B^+$.
The converse also holds if $H^2(\beta;\phi^\theta)\cong{H_0(\beta;\mathbb{Z}^w\otimes\phi^\theta)}$ 
has no 2-torsion. 
For then restriction to $H^2(\beta^+;\phi^\theta)$ is injective,
since composition with the transfer is multiplication by 2.
(See \S9 of Chapter III of \cite{Br}.)

The situation is a little more complicated when $F=Kb$.
We may view $Kb$ as the quotient of $\mathbb{R}^2$ by a glide-reflection $x$
along the $X$-axis and a unit translation $y$ parallel to the $Y$-axis.
Then $\kappa=\pi_1(Kb)$ has presentation $\langle{x,y}\mid{xyx^{-1}=y^{-1}}\rangle$,
and $\zeta\kappa$ is generated by the image of $x^2$.
Let $\alpha$ and $\gamma$ be the automorphisms determined by
$\alpha(x)=x^{-1}$,  $\gamma(x)=xy$ and $\alpha(y)=\gamma(y)=y$.
Then  $Aut(\kappa)$ is generated by $\alpha$, $\gamma$ and $c_x$, and $\gamma^2=c_y$.
It is easily verified that $\alpha\gamma=\gamma\alpha$,
and so $Out(\kappa)\cong(Z/2Z)^2$ is the image of an abelian subgroup 
$\langle\alpha,\gamma\rangle<Aut(\kappa)$.

\begin{theorem}
Let $p:E\to{B}$ be a bundle with base $B$ a surface and fibre $F=Kb$.
Let $\beta=\pi_1(B)$ and $\phi=\kappa$,
and let $\theta$ be the action determined by conjugation in $\pi_1(E)$.
Then $p$ has a section if and only if $\theta$ factors through 
$Aut(\kappa)$ and $[s(r)]=0$. 
If $B$ is orientable then $p$ has a section if and only if $[s(r)]=0$.
\end{theorem}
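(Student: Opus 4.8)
The plan is to reduce to the group extension $\xi(p)\colon 1\to\kappa\to\pi\to\beta\to1$ and then feed it into the constructions of \S5 and \S6. Since $Kb$ and $B$ are aspherical, the homotopy exact sequence of the bundle shows $\pi_i(E)=0$ for $i\geq2$, so $E\simeq K(\pi,1)$ and $B\simeq K(\beta,1)$. Hence $p$ has a section if and only if $p_*\colon\pi\to\beta$ splits: a section of $p$ induces a splitting of $p_*$, and conversely a homomorphism $\beta\to\pi$ splitting $p_*$ is realised by a map $B\to E$ lying over the identity up to homotopy, which is then deformed to an honest section by the homotopy lifting property. Thus $p$ has a section if and only if $\xi(p)$ splits. (Alternatively one could mimic the preceding proof: every split extension $\kappa\rtimes_{\widetilde\theta}\beta$ is realised by a $Kb$-bundle over $B$ with a section, the relevant realisation obstruction living in $H^3(B;\zeta\kappa^\theta)=0$.)

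Granting this, the first assertion is immediate. By the splitting criterion recalled in \S5, $\xi(p)$ splits if and only if $\theta$ factors through $Aut(\kappa)$ and the extension class $[\xi(p)]\in H^2(\beta;\zeta\kappa^\theta)$ is zero. As $B$ is a surface, $\beta$ is a $PD_2$-group with a one-relator presentation $\langle X\mid r\rangle$, so the Lemma of \S6 applies: when $\theta$ factors through $Aut(\kappa)$ the class $[s(r)]\in\zeta\kappa/J_w\zeta\kappa$ is defined, and $[\xi(p)]=0$ if and only if $[s(r)]=0$. Concatenating these equivalences gives the first statement.

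For the second assertion I would show that orientability of $B$ forces $\theta$ to lift. Then $\beta$ is a $PD_2^+$-group, so $\beta^{ab}=H_1(\beta;\mathbb{Z})$ is free abelian; and since $Out(\kappa)\cong(\mathbb{Z}/2\mathbb{Z})^2$ is abelian, $\theta$ factors through $\beta^{ab}$. Recall that $Out(\kappa)$ is the image of the abelian subgroup $\langle\alpha,\gamma\rangle$ of $Aut(\kappa)$. Because $\alpha^2=1$, $\alpha\notin\langle\gamma\rangle$, and $\gamma^2=c_y$ has infinite order (no nontrivial power of $y$ is central), we get $\langle\alpha,\gamma\rangle\cong\mathbb{Z}\oplus\mathbb{Z}/2\mathbb{Z}$, and the surjection $\langle\alpha,\gamma\rangle\to Out(\kappa)$ is reduction modulo $2$ on the infinite cyclic summand $\langle\gamma\rangle$ together with the identity on $\langle\alpha\rangle$. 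A homomorphism from the free abelian group $\beta^{ab}$ therefore lifts through this surjection (lift a basis, then extend), so $\theta$ lifts to a homomorphism $\beta\to Aut(\kappa)$. Hence the hypothesis that $\theta$ factors through $Aut(\kappa)$ is automatic, and the first assertion specialises to: $p$ has a section if and only if $[s(r)]=0$.

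I expect the delicate point to be the equivalence between $p$ having a section and $\xi(p)$ splitting. For fibre $T$ the preceding proof used the based action of $GL(2,\mathbb{Z})$ on $T=\mathbb{R}^2/\mathbb{Z}^2$ to realise split extensions by bundles with sections; for $F=Kb$ the map $Aut(\kappa)\to Out(\kappa)$ has infinite kernel and no such explicit action is available, so one instead leans on the asphericity of $E$ (or on the vanishing of the realisation obstruction since $B$ is a surface). The other ingredients --- the identification of $\langle\alpha,\gamma\rangle$ and the freeness of $\beta^{ab}$ in the orientable case --- are routine.
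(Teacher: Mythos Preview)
Your proof is correct, and the second assertion is handled just as in the paper (you spell out the structure $\langle\alpha,\gamma\rangle\cong\mathbb{Z}/2\mathbb{Z}\times\mathbb{Z}$ and the lifting explicitly, whereas the paper simply asserts that a homomorphism from the free abelian group $\beta/\beta'$ to $Out(\kappa)$ factors through $\langle\alpha,\gamma\rangle<Aut(\kappa)$). The genuine difference is in the step ``$p$ has a section $\Leftrightarrow$ $p_*$ splits''. You argue homotopy-theoretically: $E$ and $B$ are aspherical, so a splitting $s:\beta\to\pi$ is realised by a map $f:B\to E$ with $p\circ f\simeq\mathrm{id}_B$, and the homotopy lifting property straightens $f$ to an honest section. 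The paper instead argues geometrically: it exhibits $\alpha$ (a reflection across the image of the $Y$-axis) and $\gamma$ (a half-translation in the $y$-direction, isotoped to fix the $Y$-axis) as commuting \emph{based} self-homeomorphisms of $Kb$, so that $\langle\alpha,\gamma\rangle\leq Aut(\kappa)$ lifts to $\mathrm{Homeo}(Kb,*)$; the semidirect product is then realised by a bundle with an obvious basepoint section, and the classification of bundles by extensions finishes the job. Your route is slicker and works verbatim for any aspherical fibre; the paper's is more hands-on and makes the section visible. One quibble: your parenthetical alternative invoking an $H^3$ obstruction is off---that class obstructs the \emph{existence} of an extension with prescribed outer action, not the realisation of a split extension by a bundle with a section---but your main argument does not depend on it.
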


\begin{proof}
The automorphism $\alpha$ is induced by a reflection across a circle of fixed points
(the image of the $Y$-axis), while the outer automorphism class of $\gamma$
is induced by a half-unit translation parallel to the $Y$-axis.
This may be isotoped  through homeomorphisms that commute 
with $\alpha$ to fix the $Y$-axis also.
Thus $\langle\alpha,\gamma\rangle$ lifts to a group 
of based self-homeomorphisms of $Kb$.
Hence $p$ has a section if and only if $p_*$ splits.
This in turn holds if and only if $\theta$ factors though 
$Aut(\kappa)$ and $[s(r)]=0$.

If $B$ is orientable then $\beta/\beta'$ is a free abelian group,
and so any homomorphism from $\beta$ to $Out(\kappa)$ factors through
$\langle\alpha,\gamma\rangle<Aut(\kappa)$.
Hence in this case $p$ has a section if and only if $[s(r)]=0$.
\end{proof}

If (as in Theorem 6) $\beta$ acts on $\zeta\phi$ through $w_1(\beta)$ 
we can make the condition $[s(r)]=0$  more explicit.
For then $H^2(\beta;\zeta\phi^\theta)$ maps injectively to 
$H^2(\beta^+;\zeta\phi)\cong\mathbb{Z}$ under 
passage to $\beta^+$.
Thus $p_*$ splits if and only if  $\theta$ factors through $Aut(\kappa)$
and the restriction to $p_*^{-1}(\beta^+)$ splits.
Since $\zeta\phi$ maps injectively to $\phi/I(\phi)\cong\mathbb{Z}$,
$H^2(\beta^+;\zeta\phi)$ in turn maps injectively to $H^2(\beta^+;\phi/I(\phi))$.
The image of $[\xi(p)]$ is the class of the extension 
\[1\to\phi/I(\phi)\to\hat\pi/I(\phi)\to\beta^+\to1,
\]
where $\hat\pi$ is the preimage of $\beta^+$.
Hence the extension is trivial if $\beta_1(\hat\pi)$ is odd, by Lemma 2.

\smallskip
{\it Examples}. Let $\pi$ be a discrete cocompact subgroup of $Nil^3\times\mathbb{R}$.
Then $\zeta\pi\cong\mathbb{Z}^2$ and $\pi/\zeta\pi\cong\mathbb{Z}^2$,
and so the coset space $E=\pi\backslash{Nil^3\times\mathbb{R}}$ 
is the total space of a $T$-bundle over $T$.
The action is trivial, and so the split extension is the product $\mathbb{Z}^4$.
Thus the bundle projection for this coset space has no section.
(In fact, $\pi/\pi'$ has rank 2, rather than 4, and so the criterion of \S5 fails.)
Similarly, coset spaces of discrete cocompact subgroups of $Nil^4$
are $T$-bundles over $T$ without sections.

The group with presentation
\[
\langle{u,v,x,y}\mid{u,v}\leftrightharpoons{x,y},
~[u,v]=x^2,~xyx^{-1}=y^{-1}\rangle
\]
is the group of a $\mathbb{N}il^3\times\mathbb{E}^1$-manifold  
which fibres over $T$ with fibre $Kb$.
The base group acts trivially on the fibre, but $\beta_1(\pi)=2$,
rather than 3, and so the bundle does not have a section.

The group with presentation
\[
\langle{u,v,x,y}\mid{u}\leftrightharpoons{x,y},~vxv^{-1}=x^{-1},~vy=yv,
~[u,v]=x^2,~xyx^{-1}=y^{-1}\rangle
\]
is the group of a flat 4-manifold  which fibres over $T$ with fibre $Kb$.
In this case $H^2(\beta;\zeta\phi)=Z/2Z$, but $[s(r)]\not=0$,
and so the bundle does not have a section.

Similar examples over a hyperbolic base $B$ may be constructed 
from these by pullback over a degree-1 map to $T$.

\section{bundles with hyperbolic fibre}

Suppose now that $\chi(F)<0$ or, equivalently, that $\zeta\phi=1$.
If an $F$-bundle $p:E\to{B}$ has a section then the action $\theta$ factors through $Aut(\phi)$.
Conversely, 
every semidirect product $\phi\rtimes_{\widetilde\theta}\beta$ is realized by a bundle with a section.
This follows from the work in \cite {Gr} extending \cite{EE} to the based cases.
We shall not use this, 
as our concern here is merely to give examples of such bundles without sections.

We may construct the extension corresponding to an action 
$\theta:\beta\to{Out(\phi)}$ as follows. 
Let $\langle{X}|r\rangle$ be a 1-relator presentation for $\beta$, 
with associated epimorphism $q:F(X)\to\beta$.
Let $\psi:F(X)\to{Aut}(\phi)$ be a lift of $\theta{q}$.
Then $\psi(r)=c_g$, for some $g\in\phi$, 
which is uniquely determined by $\psi$, since $\zeta\phi=1$.
Let $G=\phi\rtimes_\psi{F(X)}$.
Then $\pi=G/\langle\langle{rg^{-1}}\rangle\rangle$
is an extension of $\beta$ by $\phi$ which realizes the action $\theta$.
In particular, $\pi$ is a semidirect product if $g=1$.
However, $g$ depends on the choice of  $\psi$.
We need a condition which does not depend on this choice.

If such a bundle has a section then so does the associated Jacobian bundle,
with base $B$, fibre the Jacobian of $F$ and group $\pi/\phi'$.
Lemma 2 renders more explicit a result of Morita \cite{Mo}.
He showed that if $E$ and $F$ are orientable and $F$ is of genus $g\geq2$ 
then the Jacobian bundle has a section if and only if $\theta^*\mu=0$, 
where $\mu$ is a class in $H^2(\mathcal{M}_g;H^1(\phi;\mathbb{Z}))$.
(Here $\mathcal{M}_g$ is the mapping class group of $F$,
which is isomorphic to the orientation preserving subgroup of $Out(\phi)$, 
by a theorem of Nielsen.
See \cite{FM}.)
Examining his construction, 
we see that if $f$ is the 2-cocycle with values in $\phi^{ab}$
associated to a set-theoretic section $\sigma:\beta\to\pi/\phi'$,
as in \S6 above,
then $\theta^*\mu$ is the image of $[f]$ under the change of coefficient
isomorphism induced by the Poincar\'e duality isomorphism 
$\phi^{ab}\cong{H^1}(\phi;\mathbb{Z})$.
Thus if base and fibre are orientable the Jacobian bundle 
has a section if and only if $\pi^{ab}\cong(\phi/[\pi,\phi])\oplus\beta^{ab}$.
This is so if and only if $g\in[\pi,\phi]$, where $c_g=\psi(r)$,
for some (and hence all) $\psi$ as in the preceding paragraph.

Endo has suggested the following example of a surface bundle,
with base and fibre of genus 3, which has no section \cite{En}.
Let $D_1, D_2, D_3$ be disjoint small discs in the interior of the standard unit disc $D^2$,
and let $\Sigma=\overline{D^2\setminus\cup_{j\leq3}D_j}$ be the 4-punctured sphere,
with the standard planar orientation.
Let $F=T_3=\partial(\Sigma\times[0,1])\cong\Sigma_0\cup\Sigma_1$,
where $\Sigma_0$  and $\Sigma_1$ are collar neighbourhoods of $\Sigma\times\{0\}$
and $\Sigma\times\{1\}$, respectively,
meeting along $N=\partial\Sigma\times\{\frac12\}$.
Let $j_0$ and $j_1$ be the natural identifications of $\Sigma$ with $\Sigma_0$ and $\Sigma_1$,
respectively.
Orient $F$ so that $j_0$ is orientation preserving.
Then $j_1$ is orientation reversing.

Let $b_1,b_2,b_3,b_4$ be the boundary components of $\Sigma$,
and $x,y,z$ be simple closed curves in the interior of $\Sigma$, 
as in Figure 5.1 of \cite{FM}.
Let $d_1,\dots,d_4$ be simple closed curves parallel to $b_1,\dots,b_4$
in the interior of $\Sigma$.
The left hand Dehn twists $t_{d_1},\dots ,t_z$ about these curves fix $\partial\Sigma$.
The lantern relation asserts that 
\[
t_xt_yt_z=t_{d_1}t_{d_2}t_{d_3}t_{d_4},
\]
up to isotopy {\it rel\/} $\partial\Sigma$.
(See Chapter 5 of \cite{FM}.)

Let $x_i=j_i(x)$, and so on.
Then $t_{x_0}=j_0t_xj_0^{-1}$, etc, while $t_{x_1}=j_1t_x^{-1}j_1^{-1}$,
since the notions of left and right Dehn twist are interchanged under 
an orientation reversing homeomorphism.
Hence the lantern relation gives two equations
\[
t_{x_0}t_{y_0}t_{z_0}=t_{d_{10}}t_{d_{20}}t_{d_{30}}t_{d_{40}},
\]
and
\[
t_{x_1}^{-1}t_{y_1}^{-1}t_{z_1}^{-1}=
t_{d_{11}}^{-1}t_{d_{21}}^{-1}t_{d_{31}}^{-1}t_{d_{41}}^{-1},
\]
up to isotopy in $F$ {\it rel\/} $N$.
Combining the last two equations and using the commutativity of Dehn twists 
about disjoint curves gives
\[
t_{x_0}t_{y_0}t_{z_0}t_{x_1}^{-1}t_{y_1}^{-1}t_{z_1}^{-1}=
t_{d_{10}}(t_{d_{11}})^{-1}t_{d_{20}}(t_{d_{21}})^{-1}t_{d_{30}}(t_{d_{31}})^{-1}t_{d_{40}}(t_{d_{41}})^{-1}.
\]

Let $f$ be the hyperelliptic involution of $F$ which maps $\Sigma_0$ 
onto $\Sigma_1$.
This is orientation preserving, 
but induces an orientation-reversing involution of $N$, 
with two fixed points in each component of $N$.
Let $*$ be one of the two fixed points of $f$ on $b_1$, and let $g=[b_1]\in\phi=\pi_1(F,*)$.
Then $t_{d_{10}}(t_{d_{11}})^{-1}$ induces $c_g$  on $\phi$,
while $t_{d_{i1}}$ is isotopic to $t_{d_{i0}}$ {\it rel} $*$, for $i\geq2$.
If we modify Figure 5.1 of \cite{FM} so that $b_1$, 
$b_2$ and $b_3$ are aligned vertically down the $Y$-axis, 
and the fixed points of $f$ are the intersections of the boundary 
with this axis then we see that we may assume that
$f(x_0)=x_1$ and $f(y_0)=y_1$.
However, $f(z_0)=t_{y_1}^{-1}(z_1)$, i.e., $z_1=ft_{y_0}(z_0)$.
Thus $t_{x_1}=ft_{x_0}f^{-1}$, $t_{y_1}=ft_{y_0}f^{-1}$ and 
$t_{z_1}=ft_{y_0}t_{z_0}(ft_{y_0})^{-1}$.
The equation becomes
\[[t_{x_0},t_{y_0}t_{z_0}f][t_{y_0},t_{z_0}f][t_{z_0},ft_{y_0}]=c_g
\] 
in $Aut(\phi)$, the mapping class group of $(F,*)$.
The left hand side is a product of three commutators,
and so we may define an action $\theta:\beta\to{Out(\phi)}$
which sends the standard generators to 
$t_{x_0}$, $t_{y_0}t_{z_0}f$, $t_{y_0}$, $t_{z_0}f$, 
$t_{z_0}$ and $ft_{y_0}$, respectively.
(Thus $\mathrm{Im}(\theta)$ is generated by $t_{x_0},t_{y_0},t_{z_0}$ and $f$.)
It is not hard to see that the image of $g$ in $\phi/[\pi,\phi]\cong(Z/2Z)^4$ is nontrivial.
Hence $\pi^{ab}$ is a proper quotient of $\phi/[\pi,\phi]\oplus\beta^{ab}$,
and so the associated Jacobian bundle has no section.
Hence the $F$-bundle determined by $\theta$ does not have a section either.
However,
N.Salter has shown that there is a 2-fold cover of the base such that the induced $F$-bundle has a section (unpublished).

If we use the reflection $\rho$ of $F$ across $N$ instead of 
the hyperelliptic involution
then $t_{x_1}=\rho{t_{x_0}^{-1}}\rho^{-1}$, etc., 
and so we get the equation
\[
t_{x_0}t_{y_0}t_{z_0}\rho{t_{x_0}t_{y_0}t_{z_0}}\rho^{-1}=c_g.
\]
This gives rise to an $F$-bundle over $Kb$, 
with monodromy generated by $t_{x_0}t_{y_0}t_{z_0}$ and $\rho$,
and non-orientable total space.
We again see that the bundle has no section.
(The criterion of \S5 above involves no assumptions of orientability.)
However,
the bundle induced over the torus has cyclic monodromy, 
and so has a section.

The referees have suggested that Endo's idea may be extended 
in an inductive manner to give similar examples 
with base and fibre of genus $g$, for any $g\geq3$.
We should use the ``daisy chain relation" of \cite{EMV},
which is also the ``generalized lantern relation" of \cite{BKM},
and is an iteration of the lantern relation.

Are there any such examples with fibre of genus 2, 
or with hyperbolic fibre and base $T$?

\section{some questions}

The following questions mostly arise from the considerations of \S2-\S4 above.

\begin{enumerate}
\item{Is} the number of ways in which a $PD_4$-group $\pi$ with $\chi(\pi)>0$
can be an extension of surface groups bounded by $c^{\chi(\pi)}$ for some $c>1$?

\item{If} a torsion-free group is virtually the group of a surface bundle,
is it realized by an aspherical 4-manifold?

\item{Is} every iterated extension of $k\geq3$ 
surface groups realized by an aspherical $2k$-manifold?

\item{Is} every bundle with base and fibre hyperbolic surfaces
finitely covered by one which has a section?

\item{Which} bundle groups are realized by complex surfaces?

\item{If} a symplectic 4-manifold $M$ is homotopy equivalent to the total
space $E$ of a surface bundle is it diffeomorphic to $E$?

\end{enumerate}

Salter has shown that for each $n\geq1$ there is a $PD_4^+$-group 
$\pi$ with $\chi(\pi)=24n-8$ which has at least $2^n$ normal subgroups $K$ 
such that $K$ and $\pi/K$ are $PD_2^+$-groups \cite{Sa}.
On the other hand,
Corollary 5.6.1 of \cite{Hi} can be used to show that 
if $\chi(\pi)=4d$ then there are at most $d^{2d+3}$ such subgroups.
(Moreover, if $\chi(\pi)\geq16$ then at most $2^{\chi(\pi)}$ 
of these have $|\chi(\pi/K)|>\log\chi(\pi)$.)

Question 1 is only of interest for surface bundle groups of type III.
The same is true for Question 2, since the answer is yes if the
group is of type I or II, 
by Corollary 7.3.1 and Theorem 9.9 of \cite{Hi}, respectively.

Johnson has shown that every iterated extension of surface groups 
has a subgroup of finite index which is the fundamental group of an
aspherical compact smooth manifold \cite{Jo98}.
A natural extension of Question 3, which would include Question 2, 
is whether every torsion-free group which is virtually 
an iterated extension of surface groups is thus realizable.

Question 4 is closely related to Problem 2.17 of Kirby's list,
which asks whether every such bundle with $\theta$ injective
has a multisection \cite{Ki}.
(A {\it multi-section\/} of  $p$ is a surface $C\subset{E}$ such that 
$p|_C:C\to{B}$ is a finite covering projection.
In terms of groups, $p$ has a multisection if $\beta$ has a subgroup $\gamma$
of finite index such that $\theta|_\gamma$ factors through $Aut(\phi)$.
Clearly every group of type II has a multisection.)

There are only finitely many surface bundle groups $\pi$ 
with given $\chi(\pi)>0$ which are realizable by
holomorphic submersions $p:S\to{C}$, 
where $S$ is a complex surface and $C$ a complex curve,
by the geometric Shafarevitch conjecture, 
proven by Parshin and Arakelov.
Moreover, ``Kodaira fibrations" (for which $\pi$ is not virtually a product)
have at most finitely many holomorphic sections,
by work of Manin and Grauert.
(See \cite{McM} for an illuminating sketch of these results, 
and for further references.)
However, in Question 6 we allow also the possibility that 
there are complex surfaces with fundamental group an extension of $PD_2$-groups,
but for which the minimal models are not aspherical.

The final question is prompted by the facts that it is true 
when $M$ is a complex surface, and that  total spaces of 
surface bundles are sympletic.

\end{document}